\def\lct {{\rm lct}}
\def\bL{\mathbb L}
\newcommand{\bA}{{\mathbb A}}
\newcommand{\bC}{{\mathbb C}}
\newcommand{\bQ}{{\mathbb Q}}
\newcommand{\bZ}{{\mathbb Z}}
\newcommand{\cB}{{\mathcal B}}
\newcommand{\ra}{\rightarrow}
\def\lam{\lambda}
\theoremstyle{plain}
\newtheorem{thm}{Theorem}[section]
\newtheorem{cor}[thm]{Corollary}
\newtheorem{lem}[thm]{Lemma}
\newtheorem{prop}[thm]{Proposition}
\theoremstyle{definition}
\newtheorem{df}[thm]{Definition}
\newtheorem{rem}[thm]{Remark}
\newtheorem{example}[thm]{Example}
\newtheorem{subs}[section]{}
\title[Log canonical thresholds of quasi-ordinary hypersurfaces]{Log canonical thresholds of quasi-ordinary hypersurface singularities}
\author{Nero Budur}
\address{Department of Mathematics,
University of Notre Dame, 255 Hurley Hall, IN 46556, USA} \email{nbudur@nd.edu}
\author{Pedro D. Gonz\'alez-P\'erez}
\address{ICMAT.  Fac. de CC. Matem\'aticas.
Univ. Complutense de Madrid.
Plaza de las Ciencias 3. 28040. Madrid. Spain.}\email{pgonzalez@mat.ucm.es}
\author{Manuel Gonz\'{a}lez Villa}
\address{Fac. de CC. Matem\'aticas.
Univ. Complutense de Madrid.
Plaza de las Ciencias 3. 28040. Madrid. Spain
 and Match, Univ. Heidelberg, Im Neuenheimer Feld 288, 69120 Heidelberg, Germany
}\email{mgv@mat.ucm.es, villa@mathi.uni-heidelberg.de}
\keywords{log canonical threshold, quasi-ordinary singularity}
\subjclass[2010]{14B05, 32S45}
\thanks {The first author is supported by the NSA grant H98230-11-1-0169. The second and third authors are supported by MCI-Spain grant MTM2010-21740-C02.}
\begin{document}

\begin{abstract} The log canonical thresholds of irreducible quasi-ordinary hypersurface singularities are computed, using an explicit list of pole candidates for the motivic zeta function found by the last two authors.
\end{abstract}

\maketitle



\begin{subs}\label{subsLCTintro}
Let $f\in\bC[x_1,\ldots ,x_{d+1}]$ be a non-zero polynomial vanishing at the origin in $\bC^{d+1}$. Denote by $Z$ the zero locus of $f$ in a small open neighborhood $U$ of the origin. Consider a log resolution $\mu: Y\ra U$
of $Z$ that is an isomorphism above the complement of $Z$, and let $E_i$ for $i\in J$ be the
irreducible components of $\mu^{-1}(Z)$. Denote by $a_i$
the order of vanishing of $f\circ\mu$ along $E_i$, and by $k_i$
the order of vanishing of the determinant of the Jacobian of $\mu$
along $E_i$. 

The {\it log canonical threshold of $f$ at the origin} is defined as
$$
\lct _0(f):=\min \left\{\frac{k_i+1}{a_i}\ |\ i\in J \right\}.
$$
This is independent of the choice of log resolution. A polynomial $f$ is {\it log canonical at $0$} if $\lct _0(f)=1$. The definition of  the log canonical threshold extends similarly to the case of a germ of complex analytic function $f: (\bC^{d+1}, 0) \to  (\bC, 0)$.

The log canonical threshold is an interesting local invariant of the singularities of $Z$ (the smaller the log canonical threshold is, the worse the singularities of $Z$ are) with connections with many other concepts, see \cite{B-survey, Kollar, Lazarsfeld}. For example, the log canonical threshold of $f$ is the smallest number $c>0$ such that $|f|^{-2c}$ is not locally integrable. It is also the smallest jumping number of $f$,  the negative of the biggest root of the Bernstein-Sato polynomial of $f$, and in certain cases it is a spectral number of $f$. The log canonical threshold can be computed in terms of jet spaces of $\bC^{d+1}$ and $Z$, \cite{Mu}. Furthermore, the set of log canonical thresholds when $d$ is fixed but $f$ varies is known to verify the ascending chain condition, \cite{DEM}.

In this note we give a formula for the log canonical threshold of an irreducible quasi-ordinary polynomial in terms of the associated characteristic exponents, see Theorem \ref{main}.  This result generalizes the well-known case of plane curves singularities, see Example \ref{exCurves}. Unlike the curve case, the log canonical threshold of a quasi-ordinary hypersurface can involve the second characteristic exponent, not only the first one.

\end{subs}

\begin{subs}\label{subsQO}

A germ $(Z,0)$ of an equidimensional  complex analytic variety of dimension $d$
 is \textit{quasi-ordinary} (q.o.) if there exists a
finite projection $\pi: (Z,0) \rightarrow (\bC^d,0)$ that is a
local isomorphism outside a normal crossing divisor. If $(Z,0)$
is a q.o. hypersurface, there is an embedding $(Z,0) \subset (\bC^{d+1},
0) $ defined by an equation $f= 0$,  where $f \in \bC \{ x_1,
\dots, x_d  \} [{y}]$ is a \textit{q.o.~polynomial}, that is,  a
Weierstrass polynomial in ${y}$  with discriminant $\Delta_{{y}} f$ of the
form $\Delta_{{y}} f = x^\delta  u$, for a unit $u$ in the
ring $ \bC \{ x \}$ of convergent power series in the variables $x=
(x_1, \dots, x_d)$ and $\delta \in \bZ^d_{\geq 0}$.     In these coordinates
the  projection $\pi$ is the restriction of the projection
\begin{equation}
 \bC^{d+1}\to \bC^d , \quad (x_1, \dots, x_d,
y) \mapsto (x_1,\dots, x_d).
\end{equation}
The Jung-Abhyankar theorem guarantees that the roots of a q.o.~
polynomial $f$, called {\it q.o.~ branches}, are fractional power
series in the ring $\bC \{ x^{1/m}\}$, for some integer $m \geq 1$, see
\cite{Abhyankar}. Denoting by $K$ the field of fractions of $\bC \{ x_1, \dots, x_d \}$,
if $\tau \in \bC \{ x_1^{1/m}, \dots, x_d^{1/m} \} $ is a q.o.~branch then  the minimal polynomial $F \in K[y]$ of $\tau$
over $K$ has coefficients in    the ring
$\bC \{ x_1, \dots, x_d \}$ and defines the q.o.~hypersurface parametrized by $\tau$.

In this paper we suppose that the germ $(Z,0)$ is analytically irreducible, that is,
the polynomial  $f $ is irreducible in  $\bC \{ x_1, \dots, x_d  \} [{y}]$.
The geometry of an irreducible q.o. polynomial often expresses in term of the 
combinatorics of the corresponding {\it characteristic exponents} which we recall next.

If $\alpha, \beta \in \bQ^d$ we consider the preorder relation given by
$\alpha \leq \beta$ if $\beta \in \alpha + \bQ^d_{\geq 0}$. We set also $\alpha < \beta$
if       $\alpha \leq \beta$ and $ \alpha \ne  \beta$. The notation $\alpha \nleq
\beta$ means that the relation $\alpha \leq \beta$ does not hold. In $\bQ^d
\sqcup \{ \infty \}$ we set that $\alpha < \infty$.

\begin{prop} \cite[Proposition 1.3]{Gau}
\label{expo}   
If $\zeta = \sum c_{\lambda} x^\lambda \in  \bC \{ x^{1/m}\}$ is a q.o.~branch
there exist unique vectors $\lambda_1, \dots,
\lambda_g \in \bQ^d_{\geq 0}$ such that $\lambda_1 \leq \cdots \leq \lambda_g$ and
the three conditions below hold.
 We set   $\lambda_0 = 0$, $\lambda_{g+1} = \infty$,
  and  introduce the lattices
$M_0 := \bZ^d$, $M_j :=
M_{j-1} + \bZ \lambda_j$, for $j=1, \dots, g$.
\begin{enumerate}
\item [(i)]  We have that      $c_{\lambda_j} \ne 0$ for $j =1, \dots, g$.
\item [(ii)] If $c_{\lambda} \ne 0$ then the vector $\lam$ belongs to the lattice $M_j$, where $j$ is the unique integer
such that  $\lambda_j \leq \lambda$ and
    $\lambda_{j+1} \nleq \lambda$.

\item [(iii)] For $j=1, \dots, g$, the vector $\lambda_j $ does not
belong to $ M_{j-1}$.
\end{enumerate}
   If $\zeta \in \bC \{ x^{1/m}\}$ is a fractional power series satisfying the three conditions above then
$\zeta$ is a q.o.~branch.
\end{prop}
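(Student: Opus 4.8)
The plan is to translate the statement into a combinatorial analysis of the support $\mathrm{Supp}(\zeta)=\{\lambda:c_\lambda\neq 0\}\subset\tfrac1m\bZ^d_{\geq 0}$. Recall that the conjugates of $\zeta$ over $K$ are the distinct elements of $\{\zeta^{(\omega)}:\omega\in\mu_m^d\}$, where for a $d$-tuple $\omega=(\omega_1,\dots,\omega_d)$ of $m$-th roots of unity one sets $\zeta^{(\omega)}=\sum_\lambda c_\lambda\,\omega^{m\lambda}x^\lambda$ with $\omega^{m\lambda}=\prod_i\omega_i^{m\lambda_i}$, and that $\zeta$ being a q.o.\ branch means the discriminant of its minimal polynomial $F$ is a monomial times a unit in $\bC\{x\}$. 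First I would establish the standard lemma: this holds if and only if, for every $\omega\in\mu_m^d$ with $\zeta^{(\omega)}\neq\zeta$, the difference $\zeta-\zeta^{(\omega)}$ is a monomial times a unit in $\bC\{x^{1/m}\}$. The ``only if'' direction uses that $\bC\{x^{1/m}\}$ is a UFD whose only primes dividing a monomial are the $x_i^{1/m}$, so every divisor of a monomial times a unit --- in particular each $\zeta^{(i)}-\zeta^{(j)}$, whose square divides $\Delta_y F$ --- is again of this form; the ``if'' direction follows by multiplying such expressions together to recover $\Delta_y F$ up to a unit. Since $\zeta-\zeta^{(\omega)}=\sum_\lambda c_\lambda(1-\omega^{m\lambda})x^\lambda$, the assertion ``$\zeta-\zeta^{(\omega)}$ is a monomial times a unit'' says exactly that $\{\lambda\in\mathrm{Supp}(\zeta):\omega^{m\lambda}\neq 1\}$ has a $\leq$-minimum.

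For existence of the $\lambda_j$ I would build them inductively with the $M_j$: given $M_{j-1}\supseteq\bZ^d$ with $\mathrm{Supp}(\zeta)\not\subseteq M_{j-1}$, set $S_j:=\{\lambda\in\mathrm{Supp}(\zeta):\lambda\notin M_{j-1}\}$, let $\lambda_j$ be a $\leq$-minimal element of $S_j$ (one exists by Dickson's lemma), and put $M_j:=M_{j-1}+\bZ\lambda_j$; the process stops at some finite $g$ with $\mathrm{Supp}(\zeta)\subseteq M_g$, since $\bZ^d=M_0\subsetneq M_1\subsetneq\cdots$ all lie in $\tfrac1m\bZ^d$, which has index $m^d$ over $\bZ^d$. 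The crux --- the step I expect to be the real obstacle --- is that $S_j$ has a \emph{unique} $\leq$-minimal element. Suppose $a,b\in S_j$ were both $\leq$-minimal, distinct, hence incomparable. Work in the finite abelian group $G:=\tfrac1m\bZ^d/M_{j-1}$, every character of which is induced by some $\chi_\omega(\lambda):=\omega^{m\lambda}$ ($\omega\in\mu_m^d$) that is trivial on $M_{j-1}$; write $\bar a,\bar b\in G$ for the nonzero images of $a,b$. Choose $\chi_1$ with $\chi_1(\bar a)\neq 1$. Then $\zeta-\zeta^{(\omega_1)}$ has support contained in $S_j$ (as $\chi_1$ kills $M_{j-1}$) and containing $a$; by the lemma this support has a $\leq$-minimum, which must be $a$ itself (as $a$ is $\leq$-minimal in $S_j$), so the support lies in $a+\tfrac1m\bZ^d_{\geq 0}$; since $a\nleq b$ this forces $\chi_1(\bar b)=1$. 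Symmetrically there is $\chi_2$ with $\chi_2(\bar b)\neq 1$ and $\chi_2(\bar a)=1$. But then $(\chi_1\chi_2)(\bar a)=\chi_1(\bar a)\neq 1$, so by the same argument $(\chi_1\chi_2)(\bar b)=1$, whereas $(\chi_1\chi_2)(\bar b)=\chi_2(\bar b)\neq 1$ --- a contradiction. Hence the $\leq$-minimal elements of $S_j$ are pairwise comparable, so there is exactly one, and $\lambda_j$ is well defined.

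The remaining assertions are then formal. Monotonicity $\lambda_{j-1}\leq\lambda_j$ holds because $\lambda_j\in S_{j-1}$ (being in $\mathrm{Supp}(\zeta)$ but not in $M_{j-2}\subseteq M_{j-1}$) while $\lambda_{j-1}=\min S_{j-1}$; property (iii) is the construction $\lambda_j\notin M_{j-1}$, and (i) is $\lambda_j\in\mathrm{Supp}(\zeta)$. Now the $\lambda_j$ are strictly increasing and totally ordered, so for any $\lambda\in\mathrm{Supp}(\zeta)$ the set $\{i:\lambda_i\leq\lambda\}$ is an initial segment $\{0,\dots,j\}$, which yields the unique index $j$ of (ii); if $j_0$ is the least index with $\lambda\in M_{j_0}$, then $\lambda\in S_{j_0}$ forces $\lambda_{j_0}\leq\lambda$, hence $j_0\leq j$ and $\lambda\in M_{j_0}\subseteq M_j$, proving (ii). Uniqueness of the $\lambda_j$, and of $g$, follows by induction on $j$: if $\lambda_j',M_j'$ is another such system with $M_{j-1}=M_{j-1}'$, then $\lambda_j\in\mathrm{Supp}(\zeta)\setminus M_{j-1}'$, so applying (ii) for the primed system to $\lambda_j$ gives $\lambda_j'\leq\lambda_j$, and symmetrically $\lambda_j\leq\lambda_j'$; hence $\lambda_j=\lambda_j'$ and $M_j=M_j'$, and comparison of the stopping indices gives $g=g'$. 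Finally, for the converse I would run the computation backwards: given $\zeta$ satisfying (i)--(iii) and $\omega\in\mu_m^d$ with $\zeta^{(\omega)}\neq\zeta$, let $j$ be least with $\chi_\omega$ nontrivial on $M_j$; then $\chi_\omega(\lambda_j)\neq 1$, while (ii) together with $\lambda_1\leq\cdots\leq\lambda_g$ shows that every $\lambda\in\mathrm{Supp}(\zeta)$ with $\chi_\omega(\lambda)\neq 1$ lies in $\lambda_j+\tfrac1m\bZ^d_{\geq 0}$, and $\lambda_j$ itself occurs there by (i); so $\zeta-\zeta^{(\omega)}$ is a monomial times a unit, and by the lemma $\zeta$ is a q.o.\ branch. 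Only the uniqueness of the minimal element of $S_j$ is delicate; everything else is bookkeeping with the preorder and with characters of finite abelian groups.
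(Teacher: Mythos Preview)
The paper does not give its own proof of this proposition: it is quoted verbatim from \cite[Proposition~1.3]{Gau} and used as background. There is therefore nothing in the paper to compare your argument against.

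For what it is worth, your sketch is correct and is essentially the classical Lipman--Gau argument. The reduction lemma (q.o.\ branch $\Leftrightarrow$ every nontrivial difference $\zeta-\zeta^{(\omega)}$ is a monomial times a unit) is the standard starting point, and your character-theoretic proof that $S_j=\mathrm{Supp}(\zeta)\setminus M_{j-1}$ has a unique $\leq$-minimal element is precisely the crux of the matter, handled cleanly via the trick with $\chi_1\chi_2$. The remaining steps (monotonicity, verification of (i)--(iii), uniqueness of the sequence by induction, and the converse via choosing the least $j$ with $\chi_\omega$ nontrivial on $M_j$) are, as you say, routine.
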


\begin{df}
The vectors $\lambda_1 , \dots, \lambda_g $ in Proposition \ref{expo}
are called the {\em characteristic exponents} 
of the q.o.~ branch $\zeta$. 

\end{df}

We introduce also some numerical invariants associated to the characteristic exponents. 
We denote by $n_j$ the index $[M_{j-1} : M_j]$ for $j=1, \dots, g$.
We have that $e_0 := \deg_y f = n_1 \dots n_g$ (see \cite{Lipman}). We define inductively the integers $e_j$ 
by the formula
$e_{j-1}= n_j e_{j}$ for $j=1, \dots, g$.
We set $\ell_0 =0$. If $ 1\leq j \leq g$ we denote by   $\ell_j$ the number of
coordinates of $\lambda_{j}$  which are different from zero.

We denote by $( \lambda_{j,1}, \dots, \lambda_{j,d})$ the coordinates of the
characteristic exponent $\lambda_j$ with respect to the canonical basis
of $\bQ^d$, and by  $\geq_{\mbox{\rm lex}}$ the lexicographic order.
We assume in this note that 
\begin{equation} \label{lex}
(\lambda_{1,1}, \dots, \lambda_{g,1}) \geq_{\mbox{\rm lex}} \cdots
\geq_{\mbox{\rm lex}} (\lambda_{1,d}, \dots, \lambda_{g,d}),
\end{equation}
a condition which holds after a suitable permutation of the variables $x_1, \dots, x_d$. 

  The q.o.~
branch $\zeta$
 is {\em normalized} if the inequalities (\ref{lex}) hold
and if $\lambda_1$ is not of the form $(\lambda_{1,1}, 0, \dots, 0)$ with
$\lambda_{1,1} < 1$.
Lipman proved that if the q.o. branch is not normalized then there exists a normalized q.o.
branch $\zeta'$ parametrizing the same germ $(Z,0)$ (see \cite[Appendix]{Gau}). 
Lipman and Gau studied q.o.~singularities from a topological view-point. 
They proved that the \textit{embedded topological type} of the hypersurface germ $(Z,
 0) \subset (\bC^{d+1}, 0)$ is classified by the  characteristic exponents of a
 normalized q.o.~branch $\zeta$ parametrizing $(Z,0)$,   see \cite{Gau, Lipman}.
\end{subs}

\begin{subs}
We introduce the following numbers in terms of the characteristic exponents:
\[
A_1 := \frac{1 + \lam_{1,1}}{e_0 \lam_{1,1}},
\, 
A_2 : = \frac{ n_1( 1 + \lam_{2,1})  }{ e_1 ( n_ 1( 1 + \lam_{2,1})  - 1)},
\, \mbox{ and  } 
A_3 :=  \frac{1 + \lam_{2,\ell_{1} +1}}{e_1 \lam_{2,\ell_{1} +1}}
\, \mbox{ if } \,  \ell_1 < \ell_2  .
\]

With the above notations our main result is the following:
\begin{thm} \label{main}
Let $f\in\bC\{x_1,\ldots,x_d\}[y]$ be an irreducible quasi-ordinary polynomial. 
We assume that the associated 
characteristic exponents verify (\ref{lex}). 
Then the log canonical threshold of $f$ at the origin is equal to: 
\begin{equation} \label{eq-lct}
\lct_0 (f) = 
\left\{ 
\begin{array}{lcllll}
\min\{ 1, A_1
 \} & \mbox{ if } & \lam_{1,1} \ne \frac{1}{n_1}, & \mbox{ or if} & g = 1,&
\\
\min\{ A_2, A_3
\} & \mbox{ if } &  \lam_{1,1} = \frac{1}{n_1}, & g> 1 & \mbox{ and } & \ell_1 < \ell_2, 
\\
A_2
 & \mbox{ if } & \lam_{1,1} = \frac{1}{n_1}, & g> 1  & \mbox{ and } &  \ell_1 = \ell_2.
\end{array}
\right.
\end{equation}
The number $\lct_0(f)$ is determined by the embedded topological type of the germ defined by $f=0$ at the origin.  
\end{thm}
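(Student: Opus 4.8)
The plan is to read off $\lct_0(f)=\min_{i\in J}(k_i+1)/a_i$ from the \emph{explicit} embedded resolution of $Z=\{f=0\}$ that the last two authors constructed in the course of computing the motivic zeta function — this resolution comes with an explicit description of every pair $(a_i,k_i)$ in terms of the characteristic exponents — and then to carry out a finite comparison of rational numbers. I would begin with the reduction to the normalized case: by Lipman's theorem (cited above) the germ $(Z,0)$ is also parametrized by a normalized q.o.\ branch; since $\lct_0(f)$ depends only on $(Z,0)$ it suffices to check that the right-hand side of \eqref{eq-lct} is unchanged under this replacement, which is a direct computation with the inversion formulas for characteristic exponents. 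From then on $\zeta$ is assumed normalized and to satisfy \eqref{lex}.

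Next I would organize the candidates. The explicit resolution is a composition of $g$ toric morphisms, one for each characteristic exponent $\lambda_j$, interleaved with toric resolutions of the intermediate normal varieties; adjoining the strict transform of $Z$, for which $(a,k)=(1,0)$ — so that $\lct_0(f)\le 1$ always — one obtains a finite set $\mathcal P$ of numbers $(k_i+1)/a_i$ with $\lct_0(f)=\min\mathcal P$. The three quantities $A_1,A_2,A_3$ are distinguished members of $\mathcal P$: $A_1$ is attached to the exceptional divisor created by the first toric morphism in the direction of $\lambda_1$; $A_3$, present only when $\ell_1<\ell_2$, is obtained by the same recipe as $A_1$ applied at the second level — with $e_1$ in place of $e_0$ and with $x_{\ell_1+1}$, the first coordinate outside the support of $\lambda_1$, in the role of $x_1$; and $A_2$ is attached to the divisor the second toric morphism produces over the strict transform of the first exceptional divisor, which is why its expression mixes the level-one datum $n_1$ with the level-two datum $\lambda_{2,1}$ and carries the characteristic ``$-1$'' in the denominator. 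Thus \eqref{eq-lct} is the statement that $\mathcal P$ attains its minimum at one of $A_1$, $A_2$, $A_3$, $1$ in the indicated ranges.

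The core of the argument is then a system of inequalities between explicit expressions in the coordinates $\lambda_{j,k}$, the lattice indices $n_j\ge 2$ and the numbers $e_0>e_1>\cdots>e_g=1$; the available inputs are the ordering $\lambda_1\le\cdots\le\lambda_g$, the relation $e_{j-1}=n_je_j$, the lex condition \eqref{lex}, the normalization of $\zeta$, and conditions (i)--(iii) of Proposition \ref{expo} (which force, e.g., $\lambda_j\notin M_{j-1}$, keeping numerators and denominators in general position). Concretely I would prove: first, that at the first level $A_1$ is the smallest value produced and $A_1<1$ exactly outside the log canonical range; second, that at the second level the smallest value produced is $\min\{A_2,A_3\}$ when $\ell_1<\ell_2$ and $A_2$ when $\ell_1=\ell_2$; third, that $A_1\le$ every second-level value precisely when $\lambda_{1,1}\ne 1/n_1$ (or $g=1$), the threshold $\lambda_{1,1}=1/n_1$ being exactly where the case distinction in \eqref{eq-lct} originates; and fourth, that every value coming from a level $j\ge 3$ is bounded below by the minimum already found, so such levels never contribute.

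I expect the fourth point to be the main obstacle. Unlike the curve case, where the second characteristic exponent is never involved, here one must rule out all of the levels $3,\dots,g$ uniformly and across the several types of exceptional divisor that each level produces (depending on whether $\ell_{j-1}<\ell_j$ or $\ell_{j-1}=\ell_j$, and on the face of the relevant Newton polyhedron). The mechanism is a monotonicity-type estimate weighing the growth of the $\lambda_{j,1}$ against the decay of the $e_j$ through $e_{j-1}=n_je_j$ with $n_j\ge 2$, and it is the bookkeeping over all of these cases that is laborious. Finally, the last assertion is immediate: $\lct_0(f)$ depends only on the germ $(Z,0)$, whose embedded topological type is — by Lipman and Gau — equivalent data to the characteristic exponents of its normalized q.o.\ branch, so that \eqref{eq-lct} exhibits $\lct_0(f)$ as a function of the embedded topological type.
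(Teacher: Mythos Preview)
Your plan is coherent and in principle would work, but it takes a genuinely different route from the paper. You propose to minimise $(k_i+1)/a_i$ directly over \emph{all} exceptional divisors of the explicit toric embedded resolution, assembling the full candidate set $\mathcal P$ and then carrying out the case analysis. The paper instead avoids ever looking at the full set $\mathcal P$: it invokes the result of Halle--Nicaise that $-\lct_0(f)$ is the largest pole of the local motivic zeta function, and combines this with the fact (from \cite{GP-GV}) that every pole of $Z_{mot,f}(\bL^{-s})_0$ lies in the small explicit list $-\cB=\{-1\}\cup\{-b_i^{(j)}/B_i^{(j)}\}$. Since, by Remark~\ref{key}, each $b_i^{(j)}/B_i^{(j)}$ \emph{is} a ratio $(k_i+1)/a_i$ for some divisor of the resolution, one gets $\lct_0(f)=\min\cB$ in one stroke; the remaining work is the purely arithmetic Proposition~\ref{min}, which identifies $\min\cB$ with the right-hand side of \eqref{eq-lct}. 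So the rational-number inequalities you anticipate in points one through four are indeed the heart of the matter, and the paper's Lemmas~\ref{one}, \ref{two} and Proposition~\ref{min} carry them out --- but only over $\cB$, not over your larger $\mathcal P$.

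What your approach buys is independence from the Halle--Nicaise theorem, at the cost of additional bookkeeping: you must control the ratios for \emph{every} ray appearing in the successive fan subdivisions, not just the distinguished ones giving $\cB$. You allude to this (``interleaved with toric resolutions of the intermediate normal varieties'', ``across the several types of exceptional divisor''), but you do not say how you will dispose of those extra divisors; the cleanest way is the standard convexity/linearity of log-discrepancies on a toric fan, which shows that the minimum over a subdivided cone is attained at one of the primitive generators already present before subdivision. Without making that step explicit, your ``fourth point'' is under-specified. Also note that the preliminary reduction to a normalized branch is unnecessary here: the paper's argument (and the statement) only assumes \eqref{lex}, and Remark~\ref{remNorm} is a consistency check rather than an ingredient of the proof.
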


\begin{cor} \label{cor-lct} With the hypothesis of Theorem \ref{main}, a singular polynomial $f$ is log canonical if and only if $g =1$ and either 
$  \lam_{1,i}  \in \{1 , \frac{1}{2} \}$ or $ \lam_{1,i} = \frac{1}{n_1}$ for $1\leq i \leq \ell_1$.
\end{cor}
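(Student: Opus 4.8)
The plan is to deduce the corollary directly from Theorem~\ref{main}, using that $f$ is log canonical at $0$ exactly when $\lct_0(f)=1$. By the last assertion of Theorem~\ref{main}, $\lct_0(f)$ depends only on the embedded topological type, so by Lipman's theorem I may assume the branch $\zeta$ is normalized, which makes the quantities $\lam_{1,i}$, $n_1$, $\ell_j$ intrinsic. I would then go through the three cases of~(\ref{eq-lct}), first ruling out $g>1$ and then analyzing $g=1$.

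Suppose $g>1$; I claim $f$ is never log canonical. If $\lam_{1,1}=\tfrac1{n_1}$, then $\lct_0(f)\le A_2$ in either sub-case of the formula. Since $\lam_2\ge\lam_1$ forces $\lam_{2,1}\ge\lam_{1,1}=\tfrac1{n_1}$, the number $N:=n_1(1+\lam_{2,1})$ satisfies $N\ge n_1+1\ge 3$; and $e_1=n_2\cdots n_g\ge 2$ because $g>1$, so a one-line estimate gives $A_2=\tfrac{N}{e_1(N-1)}<1$. If instead $\lam_{1,1}\ne\tfrac1{n_1}$, note that $p_1:=n_1\lam_{1,1}$ is a positive integer (as $\lam_1$ has order $n_1$ in $M_1/M_0$ and $\lam_{1,1}>0$) which is moreover $\ne 1$, hence $\ge 2$; with $e_0=n_1e_1\ge 2n_1$ this gives $\lam_{1,1}\ge\tfrac2{n_1}>\tfrac1{e_0-1}$, i.e.\ $A_1<1$, so $\lct_0(f)=\min\{1,A_1\}<1$.

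It remains to treat $g=1$. Then $e_0=n_1$ and $\lct_0(f)=\min\{1,A_1\}$ with $A_1=\tfrac{1+\lam_{1,1}}{n_1\lam_{1,1}}$, so $f$ is log canonical iff $A_1\ge1$, that is iff $\lam_{1,1}\le\tfrac1{n_1-1}$. The heart of the proof is to rewrite this inequality combinatorially. Writing $\lam_{1,i}=p_i/n_1$ with $p_i\in\bZ_{\ge0}$ and $\gcd(p_1,\dots,p_d,n_1)=1$ (minimality of $n_1$), and $p_1\ge\cdots\ge p_d$ with $p_1\ge1$ (from~(\ref{lex})), the bound reads $p_1\le 1+\tfrac1{n_1-1}$. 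This forces $p_1=1$ if $n_1\ge3$, so every nonzero $\lam_{1,i}$ equals $\tfrac1{n_1}$; and if $n_1=2$ it leaves only $p_1\in\{1,2\}$, where $p_1=1$ again makes all nonzero $\lam_{1,i}$ equal to $\tfrac12=\tfrac1{n_1}$, while $p_1=2$ together with the gcd condition forces the nonzero $\lam_{1,i}$ into $\{1,\tfrac12\}$. The converse implications are immediate by substituting each such $\lam_1$ back into $A_1$. Combined with the preceding paragraph, this is precisely the asserted characterization.

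I expect the one real obstacle to be the bookkeeping in the $g=1$ step: keeping the integrality and minimality constraints on $\lam_1$ consistent with the ordering~(\ref{lex}), exhausting the small-$n_1$ cases, and making sure the reduction to a normalized branch is invoked so that the combinatorial conditions refer to the correct exponents.
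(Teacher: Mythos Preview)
Your argument is correct, and close in spirit to the paper's, with one organizational difference and one step you should simply delete.

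The normalization reduction is both unnecessary and, taken literally, a gap: the corollary is stated for the characteristic exponents of the given $f$ (assumed only to satisfy~(\ref{lex})), and passing to a normalized branch changes those exponents (see Remark~\ref{remNorm}), so proving the condition for the normalized data would not yield it for the original $\lambda_{1,i}$ without a further translation you do not supply. Fortunately your subsequent argument never uses normalization---the only facts you invoke are $n_1\lambda_1\in\bZ^d$, the ordering~(\ref{lex}), and the formula~(\ref{eq-lct})---so just drop that sentence.

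On the organization: the paper does not split into the three cases of~(\ref{eq-lct}). Instead it uses the stronger fact (implicit in the proof of Theorem~\ref{main}, namely $\lct_0(f)=\min\cB$ with $A_1\in\cB$) that $\lct_0(f)\le A_1$ in \emph{all} cases. Then $\lct_0(f)=1$ gives $A_1\ge 1$, i.e.\ $e_0-1\le p_1^{(1)}/q_1^{(1)}\le n_1$, which forces $g=1$ in one line. Your route, bounding $A_2$ when $\lambda_{1,1}=1/n_1$ and $A_1$ otherwise, is slightly longer but has the virtue of using only the statement of Theorem~\ref{main} as a black box. The $g=1$ analysis---writing $\lambda_{1,i}=p_i/n_1$ with $\gcd(p_1,\dots,p_d,n_1)=1$ and reading off $p_1\le 1+\tfrac{1}{n_1-1}$---matches the paper's and is fine.
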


\begin{rem}\label{remNorm} Suppose that $\lam_1=(1/n_1,0,...,0)$.
By the \textit{inversion formulae} of \cite{Lipman} the germ $(Z,0)$ is parametrized by a normalized q.o.~branch $\zeta'$ with characteristic exponents $\lambda'_{i} = (n_1( 1+ \lambda_{i+1,1} - 1/n_1), \lambda_{i+1,2}, \dots, \lambda_{i+1,d})$ for $i=1, \dots, g-1$, in particular
$\lambda'_{i,1} > 1 $. If $f'$ is the quasi-ordinary polynomial defined by $\zeta'$ we get that 
$\lct_0(f) = \lct_0(f')$ since both are square-free and define the same germ. 
\end{rem}

\begin{example}\label{exCurves} If $n=2$ and $f\in\bC\{x\}[y]$ defines a singular  irreducible plane germ
 then 
$\lct_0(f) ={\frac{1+ \lam_1}{e_0 \lam_1}}$. This example is
well-known, see \cite{I77}. The log canonical thresholds of plane curve singularities have been considered several times.  For example \cite{Kuwata} gave a explicit formula for this invariant 
in the case of two branches and explained how to compute it for more branches. 
The case of transversal branches is treated with the help of adjoint ideals in \cite{Egorin}.  
The general non-reduced case is done  in \cite{ACNLMlct}. See also \cite{Aprodu-Naie}. 
\end{example}

\end{subs}

\begin{subs}\label{subsOrder}  {\bf Notations.}
We introduce a
sequence of vectors $\alpha_1, \dots, \alpha_g \in \bQ_{\geq 0}^d$ in terms of 
the characteristic exponents $\lambda_1, \dots, \lambda_g$.
We denote by $( \frac{q_1^{(j)}}{p_1^{(j)}}, \dots, \frac{q_d^{(j)}}{p_d^{(j)}})$ the 
coordinates of $\alpha_j$ in terms of the canonical basis of $\bQ^d$, with $\mathrm{gcd} ({q_i^{(j)}}, {p_i^{(j)}}) =1$.
The coordinates of $\alpha_j$ are defined inductively by 
\[
\frac{q_i^{(1)}}{p_i^{(1)}} := \lambda_{1, i},\mbox{ and }  \frac{q_i^{(j)}}{p_i^{(j)}} := 
p_i^{(1)} \cdots p_i^{(j-1)} (\lambda_{j, i} - \lambda_{j-1, i}).
\]
The sequences $\{\lambda_j \}_{j=1}^g$ and $\{ \alpha_j \}_{j=1}^g$ determine each other and by Proposition \ref{expo}
we get that $p_i^{(j)}$ divides $n_j$ for $1 \leq i \leq d$.


\begin{df}The following formulas define pairs of integers $(B_i^{(j)}, b_i^{(j)})$ for $1\leq i \leq d$ and $1 \leq j \leq g$:
\[ 
\begin{array}{cclcccl}
b^{(1)}_i & := &  {p}^{(1)}_i + {q}^{(1)}_i ,  &   \quad & b^{(j)}_i & := &  {p}^{(j)}_i b_i^{(j-1)}  
+ {q}^{(j)}_i,
\\
B^{(1)}_i & := &  e_0  {q}^{(1)}_i,  & \quad &  B^{(j)}_i & := &  p^{(j)}_i B^{(j-1)}_i + e_{j-1} {q}^{(j)}_i.
\end{array}
\]
\end{df}
\begin{rem}
Notice that $B_i^{ (j)} = 0$ if and only if  $\ell_j < i \leq d$ and in that case $ b_i^{(j)} = 1$.
We have also that $A_1 = \frac{b_1^{(1)}}{ B_1^{(1)}}$, $A_2 =  \frac{b_1^{(2)}}{ B_1^{(2)}}$ and 
$A_3 = \frac{b_{\ell_1 +1} ^{(2)}}{B_{\ell_1 +1}^{(2)}}$.
\end{rem}
\end{subs}

\begin{subs}

 In this section we give some properties of the set of the quotients  $\frac{b_i^{(j)}}{ B_i^{(j)}}$.
 
The following formulas are useful in the discussion below.
The first one is consequence of  Proposition \ref{expo}:
\begin{equation} \label{ell}
{0 = \ell_0 <} \ell_1 \leq \cdots \leq \ell_g \leq d.
\end{equation}
If  $\ell_{j-1} < \ell_{j}$  we deduce from the inequalities (\ref{lex}) that  
\begin{equation} \label{ell-lex}
\lambda_{j, \ell_{j-1} +1} \geq  \cdots \geq \lambda_{j, \ell_{j}} \mbox{ and } \lambda_{j, \ell_{j} +1 } = \cdots = \lambda_{j, d} =0.
\end{equation}


\begin{lem}   We have the following inequalities for  
$1 \leq k \leq g$ and $ \ell_{k-1} < i \leq \ell_k$:
\begin{equation}    \label{eq-a}
 \frac{b_{\ell_{k-1} + 1}^{(k)} }{B_{\ell_{k-1} + 1}^{(k)} } \leq  
 \frac{b_{i}^{(k)} }{B_{i}^{(k)} }, 
\end{equation}
\begin{equation} \label{eq-c}
 \frac{1}{e_{k-1}} <  \frac{b_{i}^{(k)} }{B_{i}^{(k)}}; 
\end{equation}
in addition, if  $\frac{q_{i}^{(k)}}{p_{i}^{(k)}} > \frac{1}{n_k} $ then we have
\begin{equation}    \label{eq-e}
\frac{b_{i}^{(k)} }{B_{i}^{(k)} } \leq 
 \frac{1}{e_{k}};
\end{equation} 
if  $k < g$  and $\frac{q_{i}^{(k)}}{p_{i}^{(k)}} =  \frac{1}{n_k} $ then we have
\begin{equation} \label{eq-f}
  \frac{1}{e_{k}} <  \frac{b_{i}^{(k+1)} }{B_{i}^{(k+1)} }  < \frac{1}{e_{k+1}};  
\end{equation}
and if $k < g$  and 
$\frac{q_{\ell_{k-1} + 1}^{(k)} }{p_{\ell_{k-1} + 1}^{(k)} } = \frac{1}{n_k} $  
 then we have
\begin{equation} \label{eq-b}
 \frac{b_{\ell_{k-1} + 1}^{(k+1)} }{B_{\ell_{k-1} + 1}^{(k+1)} } \leq  
 \frac{b_{i}^{(k+1)} }{B_{i}^{(k+1)} }. 
\end{equation}
\end{lem}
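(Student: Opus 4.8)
The plan is to reduce everything to the definitions of $b_i^{(j)}$ and $B_i^{(j)}$, expressing each quotient $b_i^{(j)}/B_i^{(j)}$ by unwinding the inductive formulas, and then to verify each of the five inequalities by a direct (if somewhat lengthy) computation, exploiting the fact that $p_i^{(j)}\mid n_j$, that $e_{j-1}=n_je_j$, and that $q_i^{(j)}\geq 1$ whenever $i\leq\ell_j$ while $q_i^{(j)}=0$ (so $b_i^{(j)}=1$, $B_i^{(j)}=0$) whenever $i>\ell_j$. The key structural observation, which I would isolate first as a sublemma, is that both sequences satisfy the same one-step recursion $x_i^{(j)}=p_i^{(j)}x_i^{(j-1)}+(\text{constant})\cdot q_i^{(j)}$, with the constant being $1$ for the small numerators $b$ and $e_{j-1}$ for the capital numerators $B$. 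Hence, writing $r=q_i^{(k)}/p_i^{(k)}$, one has at each stage an identity of the shape
\[
\frac{b_i^{(k)}}{B_i^{(k)}}=\frac{p_i^{(k)}b_i^{(k-1)}+q_i^{(k)}}{p_i^{(k)}B_i^{(k-1)}+e_{k-1}q_i^{(k)}},
\]
and the inequalities to be proved all say that this "mediant-like" combination lands in a controlled interval determined by $1/e_{k-1}$ and $1/e_k$.

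For \eqref{eq-a} and \eqref{eq-b} (monotonicity in the index $i$), I would argue that, among indices $i$ with $\ell_{k-1}<i\leq\ell_k$, the base case $\lambda_{k,i}$ is lexicographically ordered by \eqref{ell-lex}, so $q_{\ell_{k-1}+1}^{(k)}/p_{\ell_{k-1}+1}^{(k)}\geq q_i^{(k)}/p_i^{(k)}$; I would then show that the map sending the pair $(q,p)$ to $b_i^{(k)}/B_i^{(k)}$ is monotone decreasing in the ratio $q/p$, which reduces to a cross-multiplication check using the explicit formulas down the tower $j=1,\dots,k$. The statement \eqref{eq-c} is the assertion that $b_i^{(k)}/B_i^{(k)}>1/e_{k-1}$, equivalently $e_{k-1}b_i^{(k)}>B_i^{(k)}$; since $e_{k-1}=n_k e_k=\dots$ telescopes, I expect this to follow by induction on $k$ from $e_{k-2}b_i^{(k-1)}>B_i^{(k-1)}$ together with $p_i^{(k)}\mid n_k$ and $q_i^{(k)}\geq 1$. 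For \eqref{eq-e}, the hypothesis $q_i^{(k)}/p_i^{(k)}>1/n_k$ means $n_k q_i^{(k)}>p_i^{(k)}$, i.e.\ $n_k q_i^{(k)}\geq p_i^{(k)}+1$; combined with $e_{k-1}=n_k e_k$ this should be exactly what is needed to push $b_i^{(k)}/B_i^{(k)}$ down to $\leq 1/e_k$, again by induction. Finally \eqref{eq-f} is the two-sided estimate in the borderline case $q_i^{(k)}/p_i^{(k)}=1/n_k$, i.e.\ $n_k q_i^{(k)}=p_i^{(k)}$; here one more step of the recursion (passing from level $k$ to level $k+1$) is taken, and both bounds $1/e_k<b_i^{(k+1)}/B_i^{(k+1)}<1/e_{k+1}$ come from feeding the equality $n_k q_i^{(k)}=p_i^{(k)}$ into the formulas for $b_i^{(k+1)},B_i^{(k+1)}$ and using $q_i^{(k+1)}\geq 0$.

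The main obstacle I anticipate is bookkeeping: the quantities $b_i^{(j)},B_i^{(j)}$ are defined by a nested recursion over $j=1,\dots,g$, so a clean proof requires first establishing closed-form (or at least well-organized telescoped) expressions — for instance $B_i^{(k)}=\sum_{j=1}^k e_{j-1}q_i^{(j)}\prod_{l=j+1}^k p_i^{(l)}$ and the analogous sum for $b_i^{(k)}$ with all $e_{j-1}$ replaced by $1$ — and then carefully tracking which of $q_i^{(j)}$ vanish (namely for $j$ with $i>\ell_j$, using \eqref{ell}) so that the divisibility $p_i^{(j)}\mid n_j$ and the recursion $e_{j-1}=n_je_j$ can be invoked with the correct indices. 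Once these closed forms are in hand, each of \eqref{eq-a}–\eqref{eq-f} becomes a single cross-multiplication inequality between two such sums, provable by comparing the sums term by term; I would organize the write-up around the sublemma on closed forms and then dispatch the five inequalities in that order, since \eqref{eq-c} feeds the induction for \eqref{eq-e} and \eqref{eq-f}, and \eqref{eq-a},\eqref{eq-b} are essentially the same monotonicity statement at levels $k$ and $k+1$.
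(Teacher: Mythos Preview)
Your plan overlooks the single observation that makes the paper's proof short: under the standing hypothesis $\ell_{k-1}<i\le \ell_k$ one has $q_i^{(j)}=0$ for every $j<k$ (since $i>\ell_{k-1}\ge\cdots\ge\ell_0$), and therefore the recursions collapse to the closed forms
\[
b_i^{(k)}=p_i^{(k)}+q_i^{(k)},\qquad B_i^{(k)}=e_{k-1}\,q_i^{(k)}.
\]
You mention this vanishing as ``bookkeeping'' to be tracked inside telescoped sums, but it is the whole point: once you write $\dfrac{b_i^{(k)}}{B_i^{(k)}}=\dfrac{1}{e_{k-1}}\bigl(1+\tfrac{p_i^{(k)}}{q_i^{(k)}}\bigr)$, inequalities \eqref{eq-a}--\eqref{eq-c} are one-line checks, \eqref{eq-f} follows from one more application of the recursion (giving $\tfrac{1}{e_k}\bigl(1+\tfrac{1}{n_k+q_i^{(k+1)}/p_i^{(k+1)}}\bigr)$), and \eqref{eq-b} reduces to monotonicity in $q_i^{(k+1)}/p_i^{(k+1)}$ after observing that the hypothesis forces $q_i^{(k)}/p_i^{(k)}=1/n_k$ for \emph{all} $i$ in the range. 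No induction on $k$ is needed anywhere; your proposed induction ``from $e_{k-2}b_i^{(k-1)}>B_i^{(k-1)}$'' has a vacuous base case ($B_i^{(k-1)}=0$) and never gets off the ground.

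There is also a genuine gap in your argument for \eqref{eq-e}. You assert that $n_kq_i^{(k)}\ge p_i^{(k)}+1$ is ``exactly what is needed'' to get $b_i^{(k)}/B_i^{(k)}\le 1/e_k$, but that inequality is equivalent to $p_i^{(k)}\le (n_k-1)q_i^{(k)}$, and $n_kq_i^{(k)}\ge p_i^{(k)}+1$ alone does not imply this (e.g.\ $n_k=3$, $q=2$, $p=5$). The missing ingredient is the divisibility $p_i^{(k)}\mid n_k$: writing $n_k=mp_i^{(k)}$, the strict inequality $q_i^{(k)}/p_i^{(k)}>1/n_k$ forces $mq_i^{(k)}\ge 2$, hence $q_i^{(k)}/p_i^{(k)}\ge 2/n_k$, and then $p_i^{(k)}/q_i^{(k)}\le n_k/2\le n_k-1$ since $n_k\ge 2$. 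This is precisely the step the paper isolates.
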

\begin{proof} Notice first that if $\ell_{k-1} < i \leq \ell_k$ then $q_i^{(j)} =0$ for $1 \leq j < k$ 
hence we obtain that $b_i^{(k)} = p_i^{(k)} + q_i^{(k)}$ and $B_i^{(k)} = e_{k-1}  q_i^{(k)}$. 
We deduce (\ref{eq-a}) from  (\ref{ell-lex}) and the definitions.
We get (\ref{eq-c}) from the definitions and the inequality  
\begin{equation} \label{eq-h}
\frac{1}{e_{k-1}} <      \frac{1}{e_{k-1}}  \left(  
1 + \frac{1}{\frac{{q_i^{(k)}}}{p_i^{(k)}}} \right) =   \frac{1}{e_{k}}  \left(  
\frac{1}{n_k} + \frac{1}{n_k \frac{q_i^{(k)}}{ p_i^{(k)}}} \right) =  \frac{b_{i}^{(k)} }{B_{i}^{(k)}}.
\end{equation} 
If in addition $\frac{{q_i^{(k)}}}{p_i^{(k)}} > \frac{1}{n_k}$
then we get that       $\frac{{q_i^{(k)}}}{p_i^{(k)}} \geq  \frac{2}{n_k}$. 
Then we deduce  
the inequality (\ref{eq-e}) from {the expression for  $\frac{b_{i}^{(k)} }{B_{i}^{(k)}}$ given at} formula (\ref{eq-h}) by using  that  $n_k \geq 2$.

If  in addition $k < g$ and  $\frac{q_i^{(k)}}{p_i^{(k)}} = \frac{1}{n_k}$ we get from the definitions that
\begin{equation} \label{eq-i}
 \frac{b_{i}^{(k+1)} }{B_{i}^{(k+1)} } =    \frac{1}{e_{k}}   
 \left(  1 + \frac{1}{ n_k + \frac{q_i^{(k+1)}}{p_i^{(k+1)}}    } \right), 
\end{equation} 
This implies that $  \frac{1}{e_{k}}  < \frac{b_{i}^{(k+1)} }{B_{i}^{(k+1)} }  $.   
By formula (\ref{eq-i}) and the inequalities 
$\frac{q_i^{(k+1)}}{p_i^{(k+1)}} \geq \frac{1}{n_{k+1}}$, $n_k, n_{k+1} \geq 2$  
and  
$e_k = n_{k+1}e_{k+1} $  we deduce that 
\[
\frac{b_{i}^{(k+1)} }{B_{i}^{(k+1)} } =    \frac{1}{e_{k+1}}   
 \left(  \frac{1}{n_{k+1}} + \frac{1}{ n_{k+1} ( n_k + \frac{q_i^{(k+1)} }{p_i^{(k+1)}})    } \right) 
{\leq} 
  \frac{1}{e_{k+1}}   
 \left(  \frac{1}{n_{k+1}} + \frac{1}{ n_{k+1} n_k + 1   } \right) 
 <  \frac{1}{e_{k+1}}.
\]
This proves  that the inequality (\ref{eq-f}) holds.

Finally, notice that 
$  \frac{1}{n_k} \leq \frac{q_i^{(k)}}{p_i^{(k)}} 
\leq \frac{{q_{\ell_{k-1} +1} ^{(k)}}}{p_{\ell_{k-1} +1}^{(k)}}$
by  formula (\ref{ell-lex}) and the definitions. 
If $ \frac{{q_{\ell_{k-1} +1} ^{(k)}}}{p_{\ell_{k-1} +1}^{(k)}} = \frac{1}{n_k}$
it follows that  $\frac{q_i^{(k)}}{p_i^{(k)}} = \frac{1}{n_k}$.   We deduce from this 
and  formula (\ref{eq-i})  that  (\ref{eq-b}) holds.
\end{proof}

It is easy to see from the inductive definition of the pairs   $(b_i^{(k)}, B_i^{(k)})$ that  
\begin{equation}
 \label{ord-vert}
   \frac{b_{i}^{(k)} }{B_{i}^{(k)} }  \leq   \frac{b_{i}^{(k+1)} }{B_{i}^{(k+1)} } 
     \quad \Leftrightarrow \quad q_i^{(k+1)} e_k b_i^{(k)} \leq  q_i^{(k+1)} {B_{i}^{(k)} },
\end{equation}
for  $\ell_{k-1} < i \leq \ell_g $ and $1 \leq k < g$.

\begin{lem} \label{one}
  If $\ell_{k-1} < i \leq  \ell_k$ and  $\frac{q_{i}^{(k)} }{p_{i}^{(k)} } >
\frac{1}{n_k}$ then the following inequality holds
\begin{equation}  \label{ord-v}
  \frac{b_{i}^{(k)} }{B_{i}^{(k)} }  \leq  \frac{b_{i}^{(j)} }{B_{i}^{(j)} } 
\quad \mbox{ for } 1 \leq k \leq j \leq g.
\end{equation}
\end{lem}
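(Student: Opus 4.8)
\emph{Proof strategy.} The plan is to prove, by induction on $m$ over the range $k\le m\le g$, the following statement which is slightly stronger than what is asked: under the hypotheses of the lemma, for every such $m$ one has simultaneously $\frac{b_i^{(m)}}{B_i^{(m)}}\le \frac{1}{e_m}$ and $\frac{b_i^{(k)}}{B_i^{(k)}}\le \frac{b_i^{(m)}}{B_i^{(m)}}$. The second inequality for $m=j$ is exactly (\ref{ord-v}); the first is the auxiliary bound that makes the induction self-supporting. First I would record that all the quotients involved make sense: since $\ell_{k-1}<i\le \ell_k$ and $\ell_k\le \ell_m$ for $m\ge k$ by (\ref{ell}), the preceding Remark gives $B_i^{(m)}\ne 0$; moreover $b_i^{(m)}\ge 1$ and $q_i^{(m)}\ge 0$ for all $m$, the latter because $\alpha_m\in\bQ^d_{\ge 0}$ and $p_i^{(m)}\ge 1$.

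For the base case $m=k$, the inequality $\frac{b_i^{(k)}}{B_i^{(k)}}\le \frac{b_i^{(k)}}{B_i^{(k)}}$ is trivial, and $\frac{b_i^{(k)}}{B_i^{(k)}}\le \frac{1}{e_k}$ is precisely (\ref{eq-e}), which applies since we assume $\frac{q_i^{(k)}}{p_i^{(k)}}>\frac{1}{n_k}$. For the inductive step, assuming the statement at a level $m$ with $k\le m<g$: from $\frac{b_i^{(m)}}{B_i^{(m)}}\le \frac{1}{e_m}$, i.e.\ $e_m b_i^{(m)}\le B_i^{(m)}$, together with $q_i^{(m+1)}\ge 0$, the equivalence (\ref{ord-vert}) gives $\frac{b_i^{(m)}}{B_i^{(m)}}\le \frac{b_i^{(m+1)}}{B_i^{(m+1)}}$; chaining with the inductive hypothesis $\frac{b_i^{(k)}}{B_i^{(k)}}\le \frac{b_i^{(m)}}{B_i^{(m)}}$ yields the second inequality at level $m+1$. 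For the first inequality at level $m+1$, substitute $b_i^{(m+1)}=p_i^{(m+1)}b_i^{(m)}+q_i^{(m+1)}$ and $B_i^{(m+1)}=p_i^{(m+1)}B_i^{(m)}+e_m q_i^{(m+1)}$ into $e_{m+1}b_i^{(m+1)}\le B_i^{(m+1)}$; after cancellation this becomes $p_i^{(m+1)}\bigl(e_{m+1}b_i^{(m)}-B_i^{(m)}\bigr)\le (e_m-e_{m+1})q_i^{(m+1)}$, whose left-hand side is $\le 0$ because $e_{m+1}\le e_m$ and $e_m b_i^{(m)}\le B_i^{(m)}$ with $b_i^{(m)}>0$, while the right-hand side is $\ge 0$. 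This closes the induction, and taking $m=j$ proves (\ref{ord-v}).

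The point to be careful about — the main obstacle — is that a naive induction on $j-k$ running directly through (\ref{ord-vert}) stalls: passing from level $m$ to $m+1$ requires the bound $e_m b_i^{(m)}\le B_i^{(m)}$, whereas (\ref{eq-e}) supplies it only at the starting level $m=k$. Strengthening the inductive statement to carry this bound along resolves the difficulty, and the short computation above shows it is preserved at every step; what is really used there is only $e_m\ge e_{m+1}$, which holds since $e_m=n_{m+1}e_{m+1}$ with $n_{m+1}\ge 1$. No deeper input about the characteristic exponents is needed beyond (\ref{ell}), (\ref{eq-e}) and the purely formal identity (\ref{ord-vert}).
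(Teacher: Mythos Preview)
Your proof is correct and follows essentially the same route as the paper: the paper sets $R_j:=B_i^{(j)}-e_j b_i^{(j)}$ and proves $R_j\ge 0$ for $k\le j\le g-1$ by induction, which is exactly your auxiliary inequality $\frac{b_i^{(m)}}{B_i^{(m)}}\le \frac{1}{e_m}$, and then invokes (\ref{ord-vert}) just as you do. The only cosmetic differences are that the paper rederives the base case directly (showing $R_k\ge 0\Leftrightarrow \frac{1}{n_k-1}\le \frac{q_i^{(k)}}{p_i^{(k)}}$) rather than citing (\ref{eq-e}), and carries only $R_j\ge 0$ through the induction instead of packaging the chain $\frac{b_i^{(k)}}{B_i^{(k)}}\le \frac{b_i^{(m)}}{B_i^{(m)}}$ alongside it.
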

\begin{proof}
We set $R_j := B_i^{(j)} -  e_{j} b_i^{(j)}$. 
By the equivalence    (\ref{ord-vert})  it is enough to prove that the inequality 
$R_j  \geq 0$ holds for $k \leq j \leq g-1$. 
We prove this by induction. 

For $j = k$  we have the equivalences:
\[
e_k b_i^{(k)} \leq B_i^{(k)} 
{\Leftrightarrow} 
e_k (p_i^{(k)} + q_i^{(k)} ) \leq e_{k-1} q_i^{(k)}
{\Leftrightarrow} 
 {p_i^{(k)}} + {q_i^{(k)}}   \leq n_k q_i^{(k)} 
\Leftrightarrow
\frac{1}{n_k -1} \leq  \frac{q_i^{(k)}}{p_i^{(k)}}.
\]
We deduce that 
the inequality 
\begin{equation} \label{uso-3}
R_k = B_i^{(k)}  - e_k b_i^{(k)} \geq 0  
\end{equation}
holds since $n_k \geq 2$ and  $\frac{q_i^{(k)}}{p_i^{(k)}} \geq \frac{2}{n_k}$ by hypothesis.

Assume that  $k < j$ and  $R_{j-1} \geq 0$. 
Using that $e_{j-1} = n_j e_j$,  we get the following inequalities:  
\begin{equation} \label{uso-4}
  B_i^{(j-1)} - e_j b_i^{(j-1)}  \geq    B_i^{(j-1)} - e_{j-1} b_i^{(j-1)}  =  R_{j-1} \geq 0, 
\end{equation}
and 
\[
R_j = p_i^{(j)} \left( B_i^{(j-1)} - e_j b_i^{(j-1)} \right) + e_j q_i^{(j)} (n_j -1) 
\, 
\stackrel{\mbox{(\ref{uso-4})}}{\geq}
\,
 e_j q_i^{(j)} (n_j -1)
\,
\geq 0.
\]
This completes the proof.
\end{proof}

\begin{lem} \label{two}
     If $\ell_{k-1} < i \leq  \ell_k$ and if $\frac{q_{i}^{(k)} }{p_{i}^{(k)} } =
\frac{1}{n_k}$ then the following inequality holds
\begin{equation}  \label{ord-v-esp}
  \frac{b_{i}^{(k+1)} }{B_{i}^{(k+1)} } \leq  \frac{b_{i}^{(j)} }{B_{i}^{(j)} } 
 \quad \mbox{ for } 
1 \leq k \leq j \leq  g. 
\end{equation}
\end{lem}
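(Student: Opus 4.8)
The plan is to follow the proof of Lemma~\ref{one} almost verbatim, the only differences being the index at which the induction starts and one extra case. As there, set $R_j := B_i^{(j)} - e_j b_i^{(j)}$. By the equivalence~(\ref{ord-vert}), the chain $\frac{b_i^{(k+1)}}{B_i^{(k+1)}} \leq \frac{b_i^{(k+2)}}{B_i^{(k+2)}} \leq \cdots \leq \frac{b_i^{(j)}}{B_i^{(j)}}$ for $k+1 \leq j \leq g$ follows once one knows $R_m \geq 0$ for $k+1 \leq m \leq g-1$; the value $j = k$ also permitted by the range $1 \leq k \leq j \leq g$ is handled by a one-line estimate.

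First I would record the shape of the data. Since $\ell_{k-1} < i \leq \ell_k$, all of $q_i^{(1)}, \ldots, q_i^{(k-1)}$ vanish, so $b_i^{(k)} = p_i^{(k)} + q_i^{(k)}$ and $B_i^{(k)} = e_{k-1} q_i^{(k)}$; the hypothesis $q_i^{(k)}/p_i^{(k)} = 1/n_k$ with $\mathrm{gcd}(q_i^{(k)}, p_i^{(k)}) = 1$ forces $p_i^{(k)} = n_k$, $q_i^{(k)} = 1$, hence $\frac{b_i^{(k)}}{B_i^{(k)}} = \frac{n_k+1}{n_k e_k} = \frac{1}{e_k}\bigl(1 + \frac{1}{n_k}\bigr)$ (using $e_{k-1} = n_k e_k$). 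Substituting this into the recursion is exactly how formula~(\ref{eq-i}) is derived, and it gives $\frac{b_i^{(k+1)}}{B_i^{(k+1)}} = \frac{1}{e_k}\bigl(1 + \frac{1}{n_k + q_i^{(k+1)}/p_i^{(k+1)}}\bigr) \leq \frac{1}{e_k}\bigl(1 + \frac{1}{n_k}\bigr) = \frac{b_i^{(k)}}{B_i^{(k)}}$, since $q_i^{(k+1)}/p_i^{(k+1)} \geq 0$. This settles $j = k$.

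For $j \geq k+1$ I would prove $R_m \geq 0$ for $k+1 \leq m \leq g-1$ by induction on $m$. The base case $m = k+1$ is precisely the right-hand inequality of~(\ref{eq-f}): $\frac{b_i^{(k+1)}}{B_i^{(k+1)}} < \frac{1}{e_{k+1}}$ says $R_{k+1} = B_i^{(k+1)} - e_{k+1} b_i^{(k+1)} > 0$. For the inductive step, assuming $R_{m-1} \geq 0$ with $k+2 \leq m \leq g-1$, the identity $e_{m-1} = n_m e_m$ and $n_m \geq 2$ give $B_i^{(m-1)} - e_m b_i^{(m-1)} \geq B_i^{(m-1)} - e_{m-1} b_i^{(m-1)} = R_{m-1} \geq 0$, whence the defining recursion for $(b_i^{(m)}, B_i^{(m)})$ yields
\[
R_m = p_i^{(m)}\bigl(B_i^{(m-1)} - e_m b_i^{(m-1)}\bigr) + e_m q_i^{(m)}(n_m - 1) \geq 0,
\]
the very computation already carried out in the proof of Lemma~\ref{one}. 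Chaining the inequalities $\frac{b_i^{(m)}}{B_i^{(m)}} \leq \frac{b_i^{(m+1)}}{B_i^{(m+1)}}$, valid by~(\ref{ord-vert}) once $R_m \geq 0$, from $m = k+1$ up to $m = j-1$ gives $\frac{b_i^{(k+1)}}{B_i^{(k+1)}} \leq \frac{b_i^{(j)}}{B_i^{(j)}}$ for every $j$ with $k+1 \leq j \leq g$, which together with the case $j = k$ completes the argument.

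The one structural novelty compared with Lemma~\ref{one} is the entry point of the induction: there one has $R_k \geq 0$ directly from $q_i^{(k)}/p_i^{(k)} \geq 2/n_k$, whereas here $R_k < 0$, since $\frac{b_i^{(k)}}{B_i^{(k)}} = \frac{1}{e_k}(1 + \frac{1}{n_k}) > \frac{1}{e_k}$; thus the sequence $b_i^{(j)}/B_i^{(j)}$ genuinely drops between $j = k$ and $j = k+1$, and the induction must begin one step later, fed by~(\ref{eq-f}) instead of an elementary estimate. I do not anticipate a real obstacle; the only point needing attention is that~(\ref{eq-f}) presupposes $k < g$, which is in any case forced by the statement, since $b_i^{(k+1)}/B_i^{(k+1)}$ must be defined.
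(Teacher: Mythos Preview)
Your proof is correct and follows essentially the same route as the paper: compare the levels $j=k$ and $j=k+1$ via~(\ref{eq-h}) and~(\ref{eq-i}), then run the $R_j \geq 0$ induction from $j=k+1$ onward exactly as in Lemma~\ref{one}. The only cosmetic difference is that for the base case $R_{k+1} \geq 0$ you invoke the already-established right inequality of~(\ref{eq-f}), whereas the paper re-derives it directly as inequality~(\ref{eq-j}); your version is slightly cleaner since~(\ref{eq-f}) has already done that work.
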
 
\begin{proof} To compare  $\frac{b_i^{(k+1)}}{B_i^{(k+1)}}$ and $\frac{b_i^{(k)}}{B_i^{(k)}}$ 
we use the expressions (\ref{eq-i}) and (\ref{eq-h}).  

By (\ref{ord-vert}) it is enough  to prove that 
$R_j :=  B_i^{(j)} - e_j b_i^{(j)}  \geq 0$ for $k < j < g$. We prove this by induction on $j$. 
The inequality  $R_{k+1} \geq 0$  is equivalent to 
\begin{equation} \label{eq-j}
 e_{k+1} ( p_i^{(k+1)} b_i^{(k)} + q_i^{(k+1)} )   \leq p_i^{(k+1)} B_i^{(k)} + e_k q_i^{(k+1)}.   
\end{equation}
By hypothesis we have $B_i^{(k)} = e_{k-1}$ and $b_i^{(k)} = 1 + n_k$ hence (\ref{eq-j}) holds
since $e_{k-1} = n_k e_k = n_k n_{k+1} e_{k+1}$ and $n_k, n_{k+1} \geq 2$. 

If $k+1 < j < g$ then we deduce from the induction hypothesis that $R_j \geq 0$ as in Lemma \ref{one}.
\end{proof}

We set
\begin{equation} \label{bset}
 \cB := \{ 1 \} \cup \{ \frac{b_{i}^{(j)} }{B_{i}^{(j)} } \mid 1 \leq i \leq \ell_j \mbox{ and } 
     1 \leq j \leq  g \} \subset \bQ_{\geq 0}. 
\end{equation}

\begin{prop} \label{min} The minimum of the set $\cB$ is the number defined by the right-hand side of formula 
(\ref{eq-lct}).  
\end{prop}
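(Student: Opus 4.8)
The plan is to show that the right-hand side of \eqref{eq-lct} is both a lower bound and an element of $\cB$. Since $1\in\cB$, and since $\cB$ consists of the $1$ together with quotients $\frac{b_i^{(j)}}{B_i^{(j)}}$ for $1\le i\le\ell_j$, $1\le j\le g$, I would first reorganize the computation of $\min\cB$ by splitting the index set according to which ``block'' $i$ belongs to: for each $i$ there is a unique $k$ with $\ell_{k-1}<i\le\ell_k$, and then the relevant quotients are $\frac{b_i^{(j)}}{B_i^{(j)}}$ for $k\le j\le g$ (since $B_i^{(j)}=0$, $b_i^{(j)}=1$ for $j<k$ contribute nothing new). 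So
\[
\min\cB=\min\Bigl(\{1\}\cup\bigcup_{k=1}^{g}\ \min_{\ell_{k-1}<i\le\ell_k}\ \min_{k\le j\le g}\frac{b_i^{(j)}}{B_i^{(j)}}\Bigr).
\]

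For the inner minimum over $j$, I would invoke Lemma \ref{one} and Lemma \ref{two}: when $\frac{q_i^{(k)}}{p_i^{(k)}}>\frac1{n_k}$ the minimum over $j\ge k$ is attained at $j=k$, giving $\frac{b_i^{(k)}}{B_i^{(k)}}$; when $\frac{q_i^{(k)}}{p_i^{(k)}}=\frac1{n_k}$ (which forces $k<g$, as the last characteristic exponent cannot satisfy this by normalization/Proposition \ref{expo}) the minimum is attained at $j=k+1$, giving $\frac{b_i^{(k+1)}}{B_i^{(k+1)}}$. Next, for the minimum over $i$ within a block, I would use \eqref{eq-a} together with \eqref{eq-b}: in the first case $\frac{b_i^{(k)}}{B_i^{(k)}}$ is minimized at $i=\ell_{k-1}+1$; in the second case, since $\frac{q_i^{(k)}}{p_i^{(k)}}=\frac1{n_k}$ for \emph{all} $i$ in the block once it holds for $i=\ell_{k-1}+1$ (as noted in the proof of the Lemma via \eqref{ell-lex}), $\frac{b_i^{(k+1)}}{B_i^{(k+1)}}$ is minimized at $i=\ell_{k-1}+1$ by \eqref{eq-b}. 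So each block contributes a single candidate indexed by $i=\ell_{k-1}+1$.

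It then remains to compare these $g$ block-candidates (plus the $1$) across $k$ and match the resulting minimum to the three cases of \eqref{eq-lct}. Here I would use \eqref{eq-c} (each candidate exceeds $\frac1{e_{k-1}}$), \eqref{eq-e} (if $\frac{q}{p}>\frac1{n_k}$ the candidate is $\le\frac1{e_k}$) and \eqref{eq-f} (if $\frac{q}{p}=\frac1{n_k}$ the $k{+}1$-candidate lies strictly between $\frac1{e_k}$ and $\frac1{e_{k+1}}$), so that the candidates from successive blocks are interleaved by the decreasing sequence $\frac1{e_0}>\frac1{e_1}>\cdots$; this forces the overall minimum to come from $k=1$ (i.e. the first block, $\ell_0<i\le\ell_1$). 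Finally I would split on whether $\lambda_{1,1}=\frac{q_1^{(1)}}{p_1^{(1)}}$ equals $\frac1{n_1}$ or not, and on $g=1$ versus $g>1$, and on $\ell_1<\ell_2$ versus $\ell_1=\ell_2$, identifying $\frac{b_1^{(1)}}{B_1^{(1)}}=A_1$, $\frac{b_1^{(2)}}{B_1^{(2)}}=A_2$, and $\frac{b_{\ell_1+1}^{(2)}}{B_{\ell_1+1}^{(2)}}=A_3$ as recorded in the Remark, and checking that in the non-normalized-first-exponent case the correct candidate among $A_2,A_3$ survives (when $\ell_1=\ell_2$ the index $\ell_1+1$ exceeds $\ell_2$, so $A_3$ does not occur and only $A_2$ remains). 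The main obstacle I anticipate is the bookkeeping in this last comparison step: one must be careful that the $j=1$ term $\frac{b_1^{(1)}}{B_1^{(1)}}$ is only eligible when $\lambda_{1,1}\ne\frac1{n_1}$ (otherwise Lemma \ref{two} pushes it to $j=2$), and that the strict inequalities in \eqref{eq-f} are exactly what prevents a later block from beating the first — verifying the chain of comparisons cleanly, rather than getting lost in cases, is where the real work lies.
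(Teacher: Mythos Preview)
Your overall organization---reduce to one candidate per block via Lemmas \ref{one}, \ref{two} and inequalities \eqref{eq-a}, \eqref{eq-b}, then compare the block candidates using the interleaving $\frac{1}{e_0}>\frac{1}{e_1}>\cdots$---is essentially the same strategy the paper uses, just packaged differently. But the sketch has two genuine gaps.

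First, the parenthetical ``which forces $k<g$'' is false: nothing in Proposition~\ref{expo} or the normalization hypothesis prevents $\frac{q_i^{(g)}}{p_i^{(g)}}=\frac{1}{n_g}$ for some $i$ with $\ell_{g-1}<i\le\ell_g$. When this happens there is no level $g{+}1$ to push to, so Lemma~\ref{two} does not apply and the block's only term is $\frac{b_i^{(g)}}{B_i^{(g)}}$ itself. In particular, for $g=1$ with $\lambda_{1,1}=\frac{1}{n_1}$ the set $\cB$ reduces to $\{1\}\cup\{\frac{b_i^{(1)}}{B_i^{(1)}}\}$ with each $\frac{b_i^{(1)}}{B_i^{(1)}}=\frac{n_1+1}{n_1}>1$, so $\min\cB=1$; this must be handled directly, not via Lemma~\ref{two}.

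Second and more seriously, your interleaving argument does \emph{not} force the minimum to come from block $k=1$. When $\lambda_{1,1}=\frac{1}{n_1}$ and $g>1$, block~1's candidate is (by Lemma~\ref{two}) $A_2=\frac{b_1^{(2)}}{B_1^{(2)}}$, which by \eqref{eq-f} lies in $(\frac{1}{e_1},\frac{1}{e_2})$. But if $\ell_1<\ell_2$ and $\frac{q_{\ell_1+1}^{(2)}}{p_{\ell_1+1}^{(2)}}>\frac{1}{n_2}$, block~2's candidate $A_3=\frac{b_{\ell_1+1}^{(2)}}{B_{\ell_1+1}^{(2)}}$ lies in $(\frac{1}{e_1},\frac{1}{e_2}]$ by \eqref{eq-c} and \eqref{eq-e}. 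These intervals overlap, so the chain $\frac{1}{e_{k-1}}<(\text{candidate}_k)\le\frac{1}{e_k}$ that you rely on collapses: both candidates sit at the same ``level'' and either can be smaller. This is precisely why the formula reads $\min\{A_2,A_3\}$ rather than $A_2$ alone, and it contradicts your intermediate claim. The paper handles this by treating the case $\frac{q_1^{(1)}}{p_1^{(1)}}=\frac{1}{n_1}$ separately and comparing $A_2$ against the block-2 contribution directly (splitting further on whether $\frac{q_{\ell_1+1}^{(2)}}{p_{\ell_1+1}^{(2)}}$ equals $\frac{1}{n_2}$); you will need the same explicit comparison, since the clean interleaving you propose is simply not available in this case.
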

\begin{proof}
    We deal first with the case $\frac{q_1^{(1)}}{p_1^{(1)}} > \frac{1}{n_1}$. 

If    $1 \leq i \leq \ell_1$ and
$\frac{q_i^{(1)}}{p_i^{(1)}} > \frac{1}{n_1}$ we get 
the following inequalities for $1 \leq j \leq g$:
\[
A_1=\frac{b_1^{(1)}}{B_1^{(1)}} \stackrel{ \mbox{(\ref{eq-a})} }{ \leq }
\frac{b_i^{(1)}}{B_i^{(1)}}  \stackrel{ \mbox{(\ref{ord-v})} }{ \leq } \frac{b_i^{(j)}}{B_i^{(j)}}.
\]
 If    $1 <i \leq \ell_1$ and
$\frac{q_i^{(1)}}{p_i^{(1)}} = \frac{1}{n_1}$ we obtain that 
\[
A_1=\frac{b_1^{(1)}}{B_1^{(1)}}   
\, 
\stackrel{ \mbox{(\ref{eq-e})} }{ \leq }
\,
\frac{1}{e_1}
\, 
\stackrel{ \mbox{(\ref{eq-f})} }{ < } 
\,
\frac{b_i^{(2)}}{B_i^{(2)}} 
\, 
 \stackrel{ \mbox{(\ref{ord-v-esp})} }{ \leq } 
\,
\frac{b_i^{(j)}}{B_i^{(j)}},          \mbox{ for }  1 \leq j \leq g.
\]
Suppose now that $1 < k \leq j \leq g$  and $\ell_{k-1} < i \leq \ell_{k}$.  
We have:
\[
A_1=\frac{b_1^{(1)}}{B_1^{(1)}} 
\, 
\stackrel{ \mbox{(\ref{eq-e})} }{ \leq }
\,
\frac{1}{e_1}   
\, 
\stackrel{k>1}{ \leq }
\, 
\frac{1}{e_{k-1}} 
\, 
\stackrel{ \mbox{(\ref{eq-c})} }{ < } 
\, 
\frac{b_{\ell_{k-1} + 1 }^{(k)}}{B_{\ell_{k-1} + 1}^{(k)}} 
\, 
\stackrel{ \mbox{(\ref{eq-a})} }{ \leq } 
\, 
\frac{b_{i}^{(k)}}{B_{i}^{(k)}}
\, 
\stackrel{ \mbox{(\ref{ord-v})} }{ \leq }
\,
 \frac{b_i^{(j)}}{B_i^{(j)}}.
\]
Formula (\ref{ord-v}) in the line above only applies if $\frac{q^{(k)}_{i}}{p^{(k)}_{i}}>\frac{1}{n_k}$. 
Otherwise $\frac{q^{(k)}_{i}}{p^{(k)}_{i}} =\frac{1}{n_k}$ and we use  that 
\[
 \,
\frac{1}{e_1}   
\, 
\stackrel{k>1}{ < }
\, 
\frac{1}{e_{k}} 
\, 
\stackrel{ \mbox{(\ref{eq-c})} }{ < } 
\, 
\frac{b^{(k+1)}_{i}}{B^{(k+1)}_{i}} 
\,
\stackrel{ \mbox{(\ref{ord-v-esp})} }{ \leq } 
\,
 \frac{b_i^{(j)}}{B_i^{(j)}}.
 \]
This finishes the proof in the case  $\frac{q_1^{(1)}}{p_1^{(1)}} > \frac{1}{n_1}$.

We suppose now that  $\frac{q_1^{(1)}}{p_1^{(1)}} = \frac{1}{n_1}$.
By (\ref{ell-lex}) it follows that  $\frac{q_i^{(1)}}{p_i^{(1)}} = \frac{1}{n_1}$ 
for  $1 \leq i \leq \ell_1$. 
We get the inequalities for $1 \leq i \leq \ell_1$ and $1 \leq j \leq g$,
\begin{equation}    \label{eq-k}
A_2=\frac{b_{1}^{(2)}}{B_{1}^{(2)}} 
\, 
\stackrel{ \mbox{(\ref{eq-b})} }{ \leq }
\,
\frac{b_{i}^{(2)}}{B_{i}^{(2)}}  
\, 
\stackrel{ \mbox{(\ref{ord-v-esp})} }{ \leq }
\,
\frac{b_{i}^{(j)}}{B_{i}^{(j)}}.
\end{equation}
If    $\ell_1< \ell_2$ and $\frac{ q_{ \ell_1+1}^{(2)} }{ p_{\ell_1+1}^{(2)}   } > \frac{1}{n_2}$ then 
we deduce the following inequalities for $ 2  \leq j \leq g $ and $\ell_1< i \leq \ell_2$: 
\[ 
 A_3=\frac{b^{(2)}_{\ell_1+1}}{B^{(2)}_{\ell_1+1}} 
 \, 
 \stackrel{\mbox{(\ref{eq-a})}}{\leq} 
 \,
 \frac{b^{(2)}_i}{B^{(2)}_i}   
\, 
\stackrel{ \mbox{(\ref{ord-v})} }{ \leq }
\,
 \frac{b_i^{(j)}}{B_i^{(j)}}.
\]
If   $\ell_1< \ell_2$ and $\frac{ q_{ \ell_1+1}^{(2)} }{ p_{\ell_1+1}^{(2)}   } = \frac{1}{n_2}$
then for $ 2  \leq j \leq g $ and $\ell_1< i \leq \ell_2$ we get      
$\frac{ q_{i}^{(2)} }{ p_{i}^{(2)}   } = \frac{1}{n_2}$  and 
\[
  A_2=\frac{b_{1}^{(2)}}{B_{1}^{(2)}}  
\,
\stackrel{ \mbox{(\ref{eq-f})} }{ < }
\,
\frac{1}{e_2}
\,
\stackrel{ \mbox{(\ref{eq-f})} }{ < }
\,
\frac{b^{(3)}_{\ell_1+1}}{B^{(3)}_{\ell_1+1}} 
\, 
\stackrel{ \mbox{(\ref{eq-b})} }{ \leq }
\,
\,
\frac{b^{(3)}_{i}}{B^{(3)}_{i}} 
\, 
\, 
\stackrel{ \mbox{(\ref{ord-v-esp})} }{ \leq }
\,
\frac{b^{(j)}_i}{B^{(j)}_i}.   
\]
For $k\geq3$ and $\ell_{k-1} < i \leq \ell_k$ we have that
\[
A_2=\frac{b_{1}^{(2)}}{B_{1}^{(2)}}  
\,
\stackrel{ \mbox{(\ref{eq-f})} }{ < }
\,
\frac{1}{e_2}
\,
\stackrel{ k \geq 3 }{ \leq }
\,
\frac{1}{e_{k-1}}
\,
\stackrel{ \mbox{(\ref{eq-c})} }{ < }
\,
\frac{b_{i}^{(k)}}{B_{i}^{(k)}}.
\]
The remaining candidates for the minimum of $\cB$ are discarded by (\ref{eq-a}), (\ref{ord-v}), and (\ref{ord-v-esp}). This completes the proof.
\end{proof}

\end{subs}

\begin{subs}

In this note we use a relation between the log canonical threshold and the poles of the motivic zeta function. 

Let $f$ be as in Section \ref{subsLCTintro}. The {\it local motivic zeta function} and the {\it local topological zeta function of} $f$ of Denef and
Loeser  (see for example \cite{Denef-LoeserBarca}) are
\[
Z_{mot, f}(T)_0:=\sum_{\emptyset \ne I\subseteq J}(\bL-1)^{|I|-1}[E_I^{\circ}\cap \mu^{-1}(0)]\cdot\prod_{i\in I}\frac{\bL^{-(k_i+1)}T^{a_i}}{1 - \bL^{-(k_i+1)}T^{a_i}},
\]
\[
Z_{top, f}(s)_0:=\sum_{\emptyset \ne I\subseteq J}\chi(E_I^{\circ}\cap \mu^{-1}(0))\cdot\prod_{i\in I}\frac{1}{a_is+k_i+1},
\]
where $E_I^{\circ}=(\cap_{i\in I}E_i)-\cup_{i\not\in I}E_i$, the symbol $[\cdot]$ represents  the class of $\cdot$ in the Grothendieck ring $K_0({\rm Var}_\bC)$ of complex algebraic varieties, $\bL$ is the class $[\bA^1]$, and
$\chi$ is the Euler-Poincar\'{e} characteristic.  $Z_{mot, f}(T)_0$ and $Z_{top, f}(s)_0$ are
independent of the choice of the log resolution $\mu$. The set of poles of $Z_{mot,f}(\bL^{-s})_0$ and the set of poles of $Z_{top, f}(s)_0$ are subsets of $\{-(k_i+1)/a_i\ |\ i\in J \}$.

To compute the log canonical threshold of irreducible quasi-ordinary singularities we will use the following result.

\begin{thm}[\cite{Halle-Nicaise} p.18; see also \cite{VZ} 2.7 and 2.8, or \cite{Halle-Nicaise2} 6.3]\label{thmLCTBfct} The biggest pole of $Z_{mot,f}(\bL^{-s})_0$ is equal to $-\lct _0(f)$.
\end{thm}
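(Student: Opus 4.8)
By the resolution formula recalled above, every pole of $Z_{mot,f}(\bL^{-s})_0$ lies in $\{-(k_i+1)/a_i\ |\ i\in J\}$, and by the definition of the log canonical threshold the largest element of this set is precisely $s_0:=-\lct_0(f)$. So it is enough to prove that $s_0$ is an actual pole, i.e.\ that no cancellation occurs at $s_0$. We use the alternative description of the motivic zeta function as a generating series of jet schemes --- the description from which the resolution formula is derived by the motivic change of variables (see \cite{Denef-LoeserBarca}): denoting by $\mathcal{X}_m$ the variety of $m$-jets $\gamma$ of $\bC^{d+1}$ with $\gamma(0)=0$ and $\mathrm{ord}_t(f\circ\gamma)=m$, one has
\[
Z_{mot,f}(T)_0=\sum_{m\geq 1}\bL^{-(d+1)m}\,[\mathcal{X}_m]\;T^m .
\]

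Assume, for contradiction, that the largest pole of $Z_{mot,f}(\bL^{-s})_0$ is some $s_1<s_0$. In the Denef--Loeser formalism this means that $Z_{mot,f}(T)_0$ belongs to the $K_0({\rm Var}_\bC)[\bL^{-1}][T]$-subalgebra of $K_0({\rm Var}_\bC)[\bL^{-1}][[T]]$ generated by the elements $(1-\bL^{-(k_j+1)}T^{a_j})^{-1}$ with $-(k_j+1)/a_j\le s_1$. For any element of this subalgebra the coefficient of $T^m$ has dimension at most $s_1 m+O(1)$: expanding $(1-\bL^{-(k_j+1)}T^{a_j})^{-1}=\sum_{\nu\ge0}\bL^{-\nu(k_j+1)}T^{\nu a_j}$, every monomial $\bL^{e}$ contributing to $T^m$ in a product of such factors satisfies
\[
e=-\sum_j\nu_j(k_j+1)=\sum_j\nu_j a_j\Bigl(-\tfrac{k_j+1}{a_j}\Bigr)\le s_1\sum_j\nu_j a_j=s_1 m ,
\]
and the finitely many polynomial factors shift this only boundedly. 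Comparing with the jet-scheme formula of the first paragraph, we would get $\dim\mathcal{X}_m\le\bigl((d+1)+s_1\bigr)m+O(1)$ for all $m$.

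This contradicts Mustata's jet-scheme description of the log canonical threshold \cite{Mu}, namely $\lct_0(f)=(d+1)-\limsup_m\tfrac{\dim\mathcal{X}_m}{m}$; equivalently $\dim\mathcal{X}_m=\bigl((d+1)-\lct_0(f)\bigr)m+O(1)$ for all $m$ in a suitable arithmetic progression. Since $(d+1)-\lct_0(f)=(d+1)+s_0>(d+1)+s_1$, the bound of the previous paragraph fails for large $m$ in that progression. Hence the largest pole of $Z_{mot,f}(\bL^{-s})_0$ is $\ge s_0$, and combined with the first paragraph it equals $s_0=-\lct_0(f)$.

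The steps needing care are the precise passage between the contact loci $\mathcal{X}_m$ (vanishing order exactly $m$) and the jet schemes of the hypersurface $\{f=0\}$ appearing in Mustata's formula --- a finite computation using the affine-bundle structure of the truncation maps $\mathcal{L}_m(\bC^{d+1})_0\to\mathcal{L}_{m-1}(\bC^{d+1})_0$ --- and the formalism of ``poles'' for rational functions over $K_0({\rm Var}_\bC)[\bL^{-1}]$, which is standard. The genuinely substantial input, quoted here, is Mustata's dimension count: this is precisely where cancellation at $s_0$ is excluded, and its own proof again goes through a log resolution together with a codimension estimate for jet schemes. A more classical alternative is to compare $Z_{mot,f}$ with the complex archimedean zeta function $\int_U|f|^{2s}\varphi$ for a suitable bump function $\varphi$: its largest pole is $-\lct_0(f)$ since the integral converges for $\mathrm{Re}\,s>-\lct_0(f)$ and diverges as $s\downarrow-\lct_0(f)$ (the integrability characterization of the threshold), after which one transfers non-cancellation along a realization map; but that transfer is not obviously simpler.
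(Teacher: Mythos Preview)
The paper does not give its own proof of this statement: Theorem~\ref{thmLCTBfct} is quoted as a known result, with references to Halle--Nicaise, Veys--Z\'u\~niga-Galindo, and Halle--Nicaise (second paper). Your sketch is essentially the argument one finds in those sources: bound the candidate poles from above by $-\lct_0(f)$ via the resolution formula, and then use the arc-space description of $Z_{mot,f}$ together with Musta\c{t}\u{a}'s dimension formula for jet schemes to show that the coefficient of $T^m$ genuinely has dimension growing like $-\lct_0(f)\cdot m$, so no cancellation can occur at $s_0=-\lct_0(f)$.

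One point deserves a sharper statement. You write: ``the largest pole is some $s_1<s_0$ \dots\ this means that $Z_{mot,f}(T)_0$ belongs to the subalgebra generated by $(1-\bL^{-(k_j+1)}T^{a_j})^{-1}$ with $-(k_j+1)/a_j\le s_1$.'' Over a field this would be immediate, but over $K_0(\mathrm{Var}_\bC)[\bL^{-1}]$ the word ``pole'' is not defined via factorization in a UFD; the usual definition says only that \emph{some} rational presentation avoids the factor corresponding to $s_0$, and that presentation could a priori introduce new apparent poles, possibly larger than $s_1$ or even than $s_0$. The references you cite sidestep this by working directly with the dimension filtration on the completed Grothendieck ring $\widehat{\mathcal{M}}_\bC$: one \emph{defines} the largest pole as $\limsup_m \dim(\text{coeff.\ of }T^m)/m$, and then your second and third paragraphs are exactly the computation of this quantity. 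So your argument is correct once the definition of ``largest pole'' is taken to be the dimension-growth one (which is the definition used in \cite{Halle-Nicaise}); just be aware that the intermediate sentence about subalgebras is not the logical hinge, the dimension estimate is.

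The other caveats you flag --- matching the exact-order contact loci $\mathcal{X}_m$ with the jet schemes of $\{f=0\}$ in Musta\c{t}\u{a}'s formula, and normalizing conventions in the arc-space expression for $Z_{mot,f}$ --- are indeed routine bookkeeping and do not affect the argument.
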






\end{subs}
\begin{subs} 
We recall some results obtained by the last two authors in \cite{GP-GV}. We use notations of Section \ref{subsQO}
and also the definition of the set $\cB$ in formula (\ref{bset}).
The following result follows from \cite[Corollary 3.17]{GP-GV}. 

\begin{thm}\label{thmPolesZeta} If $f\in \bC\{x_1, \dots, x_d\}[y]$ is an irreducible  quasi-ordinary polynomial then the poles of  $Z_{mot,f}(\bL^{-s})_0$  are contained in the set 
$\{ -\frac{b}{B} \mid \frac{b}{B} \in  \cB \}$. 
\end{thm}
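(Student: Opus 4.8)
The statement to prove is Theorem~\ref{thmPolesZeta}: the poles of $Z_{mot,f}(\bL^{-s})_0$ lie in $\{-b/B\mid b/B\in\cB\}$. The plan is to deduce this from \cite[Corollary 3.17]{GP-GV}, which computes $Z_{mot,f}(\bL^{-s})_0$ from an explicit (toric, Newton-nondegenerate after a suitable embedded toroidal modification) model of the quasi-ordinary singularity. Concretely, that corollary expresses the motivic zeta function as a finite sum indexed by the faces of a fan (equivalently, by the strata $E_I^\circ$ of an explicit log resolution built from the characteristic exponents), with the $i$-th exceptional divisor contributing a factor whose denominator is $1-\bL^{-(k_i+1)}T^{a_i}$, and where the numerical data $(a_i,k_i+1)$ attached to the relevant divisors are exactly the pairs $(B_i^{(j)},b_i^{(j)})$ (up to the normalization appearing in the Remark after Definition of $(B_i^{(j)},b_i^{(j)})$), together with the "trivial" ray giving the candidate pole $-1$.

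First I would recall, from \cite{GP-GV}, the precise list of exceptional divisors of the log resolution used there and the associated numerical pairs $(a_i,k_i)$; by the discussion in Section~\ref{subsOrder} and the Remark identifying $A_1,A_2,A_3$ with $b_1^{(1)}/B_1^{(1)}$, $b_1^{(2)}/B_1^{(2)}$, $b_{\ell_1+1}^{(2)}/B_{\ell_1+1}^{(2)}$, the ratios $(k_i+1)/a_i$ that occur are precisely the elements $b_i^{(j)}/B_i^{(j)}$ for $1\le i\le \ell_j$, $1\le j\le g$, plus the value $1$ coming from the strict transform / the divisor $y=0$ when $f$ is singular (or from the standard coordinate hyperplanes). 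Note that by the Remark after the Definition, $B_i^{(j)}=0$ exactly when $\ell_j<i\le d$, so those indices contribute no exceptional divisor meeting $\mu^{-1}(0)$ with $a_i>0$ and hence no pole candidate — this is why the index range in $\cB$ is $1\le i\le\ell_j$. Second, I would invoke the general fact, already recorded in Section~\ref{subsLCTintro} and in the displayed formula for $Z_{mot,f}(T)_0$, that the poles of $Z_{mot,f}(\bL^{-s})_0$ form a subset of $\{-(k_i+1)/a_i\mid i\in J\}$: substituting $T=\bL^{-s}$, the factor attached to $E_i$ becomes $\bL^{-(k_i+1)-a_is}/(1-\bL^{-(k_i+1)-a_is})$, whose only pole in $s$ is at $-(k_i+1)/a_i$. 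Combining these two points with the identification of the $(a_i,k_i+1)$ list gives that every pole of $Z_{mot,f}(\bL^{-s})_0$ lies in $\{-b/B\mid b/B\in\cB\}$.

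The one genuine subtlety is that Corollary~3.17 of \cite{GP-GV} computes the zeta function using a specific toroidal resolution whose exceptional divisors are indexed by rays of an explicit fan, and I must match that bookkeeping with the pairs $(B_i^{(j)},b_i^{(j)})$ of Section~\ref{subsOrder}. I would do this by the inductive description: the ray corresponding to the $j$-th characteristic exponent in the $i$-th coordinate direction has $f\circ\mu$-order $B_i^{(j)}$ and log-discrepancy $b_i^{(j)}$, which one verifies against the inductive formulas $b_i^{(j)}=p_i^{(j)}b_i^{(j-1)}+q_i^{(j)}$, $B_i^{(j)}=p_i^{(j)}B_i^{(j-1)}+e_{j-1}q_i^{(j)}$ — these are precisely the transformation rules for multiplicities and discrepancies under the successive weighted blowups realizing the toroidal modification. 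I expect this matching of notation to be the main obstacle; it is not deep, but it requires carefully tracking the conventions of \cite{GP-GV} (in particular the ordering of the variables fixed by \eqref{lex}, and the role of the normalization in Remark~\ref{remNorm} when $\lambda_1=(1/n_1,0,\dots,0)$). Once the dictionary is in place, the statement is immediate from the shape of the Denef--Loeser formula, and no separate cancellation analysis is needed because we are only asserting an \emph{inclusion} of the pole set in the candidate set, not that each candidate is an actual pole.
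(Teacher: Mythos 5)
Your proposal matches the paper's treatment: the paper likewise derives this theorem directly from \cite[Corollary 3.17]{GP-GV}, and its Remark following the theorem explains exactly the dictionary you describe, namely that each ${b_i^{(j)}}/{B_i^{(j)}}\in\cB$ corresponds to an exceptional divisor of the toroidal log resolution of \cite{GP-Fourier} along which $f\circ\mu$ vanishes to order $B_i^{(j)}$ and the Jacobian to order $b_i^{(j)}-1$, so the inclusion of poles in the candidate set follows from the shape of the Denef--Loeser formula. Your identification of the bookkeeping (including why indices with $B_i^{(j)}=0$ contribute nothing) is the same point the paper makes, so the approaches coincide.
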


\begin{rem} \label{key} 
 Theorem \ref{thmPolesZeta} is proved by giving a formula for the motivic zeta function in terms of the contact of the jets of arcs with $f$.
The proof uses the change of variable formula for motivic integrals applied to a particular log resolution of $f$. 
This log resolution $ \mu \colon Y \to U \subset \bC^{d+1}$, 
is built as a composition of toric modifications in \cite{GP-Fourier}. If 
${b_i^{(j)}}/{B_i^{(j)}} \in \cB$ then there exists an exceptional divisor $E_i^{(j)}$ of this log resolution 
such that $B_i^{(j)}$ (respectively, $b_i^{(j)} $ minus one)  is the order of 
vanishing of the pull-back of $f$ (respectively, of the determinant of the Jacobian of $\mu$) along $E_i^{(j)}$.
This is a consequence of Corollary 3.17,  Remark 3.19, and Lemma 9.11 of \cite{GP-GV}. 
\end{rem}

\begin{rem}
Notice that the pairs 
$(B_i^{(j)}, b_i^{(j)}) = (0,1)$ do not contribute to a candidate pole  of  $Z_{mot,f}(\bL^{-s})_0$. 
The list of candidate poles indicated in Theorem \ref{thmPolesZeta} arises also in \cite{ACNLM-AMS} with a different method, see \cite{GP-GV} for a comparison.
\end{rem}
\end{subs}
\begin{subs} We prove the main results of this note: 

\noindent 
{\bf Proof of Theorem \ref{main}.} 
Since $\lct _0(f)$ is by definition the
minimum of ${(k_i+1)}/{a_i}$ for $i\in J$, it follows from  Theorem
\ref{thmLCTBfct}, Theorem \ref{thmPolesZeta}, and Remark \ref{key} that 
$\lct _0(f) = \min \cB$.  The result follows then from 
Proposition  \ref{min}.  \hfill $\Box$

\medskip

\noindent 
 {\bf Proof of Corollary \ref{cor-lct}.} 
If $f$ is singular and log canonical then $1\leq
\frac{b^{(1)}_1}{B^{(1)}_1}=  \frac{1}{e_0} ( 1 + \frac{p_1^{(1)}}{q_1^{(1)}} )$. 
Since $ \frac{q_1^{(1)}}{p_1^{(1)}} \geq \frac{1}{n_1}$ we deduce that 
$
e_0 - 1 = n_1 \dots n_g -1 \leq \frac{p_1^{(1)}}{q_1^{(1)}} \leq n_1$.
This implies that $g=1$. If $n_1=2$ there are two possible cases 
$\lam_1= ( 1, \ldots ,1,1/2,\ldots ,1/2,0,\ldots ,0)$ or $\lam_1 = (1/2,\ldots ,1/2,0,\ldots ,0)$.
If $n_1 >  2$ we must have $p_1^{(1)} = n_1$ and $q_1^{(1)} = 1$, since $p_1^{(1)}$ divides $n_1$. 
By (\ref{ell-lex}) we get that  $\lam_1 = ( \frac{1}{n_1}, \ldots, \frac{1}{n_1}, 0, \ldots, 0)$. 
 \hfill $\Box$

\end{subs}

\begin{subs} We end this note with some examples.

\begin{example}Let $\lam_1=(1/3,1/3)$, $\lam_2=(7/6,2/3)$. A polynomial with these characteristic exponents is for example $f=(z^3-xy)^2-x^3y^2z^2$. We have $n_1=3$ and $n_2=2$. By Theorem \ref{main}, the log canonical threshold comes from $\lam_{2,1}$ and equals $A_2=13/22$. Indeed, $\frac{b^{(1)}_1}{B^{(1)}_1}=2/3$,
 $\frac{b^{(2)}_1}{B^{(2)}_1}=13/22$,  $\frac{b^{(2)}_2}{B^{(2)}_2}=5/8$, and the minimum of these is $13/22$.
\end{example}

\begin{example}
Let us consider a q.o.~polynomial with characteristic exponents $\lam_1= (1/2, 1/2, 0)$ and $\lam_2 = (2/3, 2/3, 11/3)$. For instance $f = (y^2 - x_1 x_2)^3 - (y^2 - x_1 x_2) x_1^6 x_2^6 x_3^{11}$. 
We have that $n_1= 2$ and $n_2= 3$ and  $\cB = \{ 1, 1/2 , 10/21, 14/33 \}$. We get $\lct_0(f) = 14/33 = A_3$. 
\end{example}
\end{subs}

{\bf Acknowledgement.} The first author would like to thank Johns Hopkins University for its hospitality during the writing of this article.


\begin{thebibliography}{99}


\bibitem{Abhyankar} S.~S.~Abhyankar, On the ramification of algebraic functions, Amer. J.
  Math. 77 (1955), 575--592. 

\bibitem{Aprodu-Naie} M.~Aprodu and D.~Naie, Enriques diagrams and log-canonical thresholds of curves on smooth surfaces.
Geom. Dedicata 146 (2010), 43--66.



\bibitem{ACNLMlct}
E. Artal Bartolo, Pi. Cassou-Nogu\`es, I. Luengo, and A. Melle-Hern\'andez, On the log-canonical threshold for germs of plane curves. {\em Singularities I,}  Contemp. Math., 474, Amer. Math. Soc., Providence, RI, 2008, pp.~1--14.




\bibitem{ACNLM-AMS}
\bysame, \emph{Quasi-ordinary power series and their
  zeta functions}, Mem. Amer. Math. Soc. 178 (2005), no.~841,  vi+85 pp.
  
\bibitem{B-survey} N. Budur,  Singularity invariants related to Milnor fibers: survey. arXiv:1012.3150.

\bibitem{DEM} T. de Fernex, L. Ein,  and M. Musta\c{t}\u{a},  Shokurov's ACC conjecture for log canonical thresholds on smooth varieties. Duke Math. J. 152 (2010), no. 1, 93--114.

\bibitem{Denef-LoeserBarca}
J.~Denef and F.~Loeser, Geometry on arc spaces of algebraic varieties, {\em  European
  {C}ongress of {M}athematics, {V}ol. {I} ({B}arcelona, 2000)}, Progr. Math.,
  vol. 201, Birkh\"auser, Basel, 2001, pp.~327--348. 

\bibitem{Egorin} V.~Egorin,
{\em Characteristic varieties of algebraic curves. }
Ph.D. Thesis, University of Illinois at Chicago, 2004, 80 pp. 


\bibitem{Gau}
Y.-N.~Gau, Embedded topological classification of quasi-ordinary
  singularities, Mem. Amer. Math. Soc. 74 (1988), no.~388, 109--129.
  With an appendix by Joseph Lipman. 



\bibitem{GP-Fourier}
P.~D. ~Gonz{\'a}lez~P{\'e}rez, Toric embedded resolutions of
  quasi-ordinary hypersurface singularities, Ann. Inst. Fourier (Grenoble)
  53 (2003), no.~6, 1819--1881. 



\bibitem{GP-GV}
P.~D. ~Gonz{\'a}lez~P{\'e}rez and M.~Gonz\'alez~Villa, Motivic Milnor fibre of a quasi-ordinary hypersurface. arXiv:1105.2480v1.


\bibitem{I77} J.-i.~Igusa,  On the first terms of certain asymptotic expansions.
{\em Complex analysis and algebraic geometry,}  Iwanami
Shoten, Tokyo, 1977, pp. 357--368.


\bibitem{Halle-Nicaise}  L.~H.~Halle and  J.~Nicaise, Motivic zeta functions of abelian varieties, and the monodromy conjecture. arXiv:0902.3755v3. 


\bibitem{Halle-Nicaise2}  \bysame, Motivic zeta functions for degenerations of abelian varieties and Calabi-Yau varieties.  
 arXiv:1012.4969.

\bibitem{Kollar} J.~Koll\'ar, 
Singularities of pairs. {\em Algebraic geometry -- Santa Cruz 1995,}  
Proc. Sympos. Pure Math., 62, Part 1, Amer. Math. Soc., Providence, RI, 1997, pp. 221--287.

\bibitem{Kuwata} T.~Kuwata, On log canonical thresholds of reducible plane curves. Amer. J. Math. 121 (1999), no. 4, 701--721.


\bibitem{Lazarsfeld} R.~Lazarsfeld, {\em Positivity in algebraic geometry. II. Positivity for vector bundles, and multiplier ideals.} Ergebnisse der Mathematik und ihrer Grenzgebiete. 3. Folge. A Series of Modern Surveys in Mathematics, 49. Springer-Verlag, Berlin, 2004. xviii+385 pp.


\bibitem{Lipman}
J.~ Lipman, Topological invariants of quasi-ordinary singularities,
  Mem. Amer. Math. Soc. 74 (1988), no.~388, 1--107. 


\bibitem{Mu} M. Musta\c{t}\v{a},  Singularities of pairs via jet schemes, J. Amer. Math. Soc. 15, (2002), 599--615.




\bibitem{VZ} W.~Veys and W.~Zuniga-Galindo, Zeta functions for analytic mappings, 
log-principalization of ideals, and Newton polyhedra. Trans. Amer. Math. 
Soc. 360 (2008), no. 4, 2205--2227.

\end{thebibliography}
\end{document}